\newcommand{\R}{{\mathbb R}}
\newtheorem{theorem}{Theorem}[section]
\newtheorem{lemma}{Lemma}[section]
\newtheorem{definition}{Definition}[section]
\begin{document}

\title{\begin{flushleft} \bf Decompositions of Local mixed Morrey-type spaces\\ and Application\\[10pt] \rm{\Large Mingwei Shi and Jiang Zhou*} \end{flushleft}}
\date{\it College of Mathematics and System Sciences, Xinjiang University, Urumqi 830046, China}
\maketitle

\begin{adjustwidth}{1cm}{1cm}
	\noindent{{\bf Abstract:}{ In this paper, we obtain predual spaces of  local mixed Morrey-type spaces, characterize mixed Hardy local Morrey-type spaces. Further also, investigate nonsmooth decomposition of local mixed Morrey-type spaces. As an application, we consider the Hardy operators on local mixed Morrey-type spaces.}}\\
\noindent{{\bf Key Words:} {local mixed Morrey-type spaces, Atomic decomposition,  Molecular decomposition, the Hardy operators, Hardy-Littlewood maximal operators}}	\\
\noindent{{\bf Mathematics Subject Classification(2010):}  42B20; 42B25; 42B35.}	
\end{adjustwidth}
\par

	\baselineskip 15pt
\section{Introduction}
\renewcommand{\thefootnote}{}
\footnotetext{*Corresponding author. The research was supported by National Natural Science Foundation of China (12061069).}
\par 
In 1938, Morrey spaces $\mathcal{M}_{p,\lambda}$ were introduced in \cite{CB1938} in relation to the research of partial differential equations. 
In 1975, D. Adams \cite{DR1975} established that Morrey spaces $\mathcal{M}_{p,\lambda}$ can describe the boundedness of the Riesz potential.
In 2004, Burenkov and Guliyev  \cite{BV2004} defined local Morrey-type spaces $L M_{p \theta, w}\left(\mathbb{R}^{n}\right)$ as follows.
\par  Let $0<p, \theta \leq \infty$ and $w$ be a non-negative measurable function on $(0, \infty)$, $f \in L M_{p \theta, w}\left(\mathbb{R}^{n}\right) $ if, $f \in L^{loc}_p \left(\mathbb{R}^{n}\right) $ and
$$
\|f\|_{L M_{p \theta, w}}:=\|f\|_{L M_{p \theta, w}\left(\mathbb{R}^{n}\right)}=\left\|w(\cdot)
\|f\|_{L_{p}(Q(0, \cdot))}\right\|_{L_{\theta}(0, \infty)} < \infty.
$$
\par
In 2004, Burenkov and Guliyev \cite{BV2004} derived a sufficient and necessary condition for the maximal operator on spaces $L M_{p \theta, w}\left(\mathbb{R}^{n}\right)$. In 2006, Burenkov et al. \cite{BV2006} received a sufficient and necessary condition for the fractional maximal operator on spaces  $L M_{p \theta, w}\left(\mathbb{R}^{n}\right)$. In 2011, Burenkov et al. \cite{BJ2011} got the boundedness of the Hardy operators on spaces  $L M_{p \theta, w}\left(\mathbb{R}^{n}\right)$. In 2017, Guliyev et al. \cite{GV2017} obtained an atomic decomposition of  spaces $L M_{p \theta, w}\left(\mathbb{R}^{n}\right)$.
\par 
In 1961, Benedek and Panzone \cite{AB1961} introduced mixed Lebesgue spaces $L^{\vec{p}}(\mathbb{R}^n) (0<\vec{p}\leq \infty)$. Recently, the mixed function spaces have new applications in partial differential equations, see \cite{cs2017,cg2017}. Various mixed function spaces are constantly proposed for reference \cite{FD1977, BA1981}. Mixed Morrey spaces $M_{\vec{q}}^{\lambda}(\mathbb{R}^n)$ were  introduced by Nogayama \cite{NT2019} in 2019. 
In 2021, Zhang and Zhou \cite{ZH2021} first defined the local mixed Morrey-type spaces $LM_{\vec{p} \theta, w}$, also investigated the boundedness of fractional intergral operators. And in 2022, Shi and Zhou \cite{SJ2022} obtained a sufficient and necessary condition about the Hardy-Littlewood maximal operator on spaces $LM_{\vec{p} \theta, w}$.
\par 
In this paper, we review local mixed Morrey-type spaces and the grand maximal operator in Section 2. In Section 3, vector valued maximal inequalities are obtained on local mixed Morrey-type spaces. In Section 4, predual spaces of local mixed Morrey-type spaces are found. In Section 5,  mixed Hardy local Morrey-type spaces are characterized. In Section 6, we attain nonsmooth decomposition on local mixed Morrey-type spaces. In Section 7, the boundedness of the Hardy operators is considered as an application. In Section 8, we compare the local mixed Morrey spaces and mixed Herz spaces, to demonstrate their norms is equivalent.

\section{Preliminaries}
	\begin{definition}(Local Mixed Morrey-type spaces) \cite{ZH2021}
	Let  $0<\vec{p},\theta\le\infty$ and $w$ be a non-negative measurable function on $(0,\infty)$. Local  mixed Morrey-type spaces were  denoted  by $LM_{\vec{p}\theta,w}(\R^n)$ respectively. For any functions $f\in L^{\rm{loc}}_1(\mathbb{R}^n)$, when $f\in LM_{\vec{p}\theta,w}(\R^n)$,  the quasi-norms
	\begin{align*}
	\|f\|_{LM_{\vec{p} \theta,w}}
	&:=\|f\|_{LM_{\vec{p} \theta,w}(\mathbb{R}^n)}:=\|w(r)\|f\|_{L_{\vec{p}}(Q(0,r))}\|_{L_{\theta}(0,\infty)}\\
	&:=\left(\int_0^\infty\left|w(r)\|f\|_{L_{\vec{p}}(Q(0,r))}\right|^{\theta}dr\right)^{\frac{1}{\theta}}<\infty.
	\end{align*}
\end{definition}

\begin{definition}(Local Mixed Morrey spaces)
	Let $\lambda \geq 0 $ and $ 0< \vec{p},\theta \leq \infty .$ We denote local mixed Morrey spaces by $LM_{\vec{p},\theta}^{\lambda}$ respectively. For any functions $f \in L^{loc}_{1}$, we say $f\in LM^{\lambda}_{\vec{p},\theta}$ when the quasi-norms
	$$
	\|f\|_{L M_{\vec{p}, \theta}^{\lambda}}=\left(\int_{0}^{\infty}\left(r^{-\lambda}\|f\|_{L_{\vec{p}}(Q(0, r))}\right)^{\theta} \frac{d r}{r}\right)^{\frac{1}{\theta}}<\infty,
	$$
when $w(r)=r^{-\lambda -\frac{1}{\theta}}$ then $LM_{\vec{p}  \theta,w}(\mathbb{R}^n) = LM_{\vec{p},\theta}^{\lambda}(\mathbb{R}^n).$
\end{definition}

\begin{lemma}\cite{ZH2021}
	Let $0<\vec{p},\theta\le\infty$ and let $w$ be a non-negative measurable function on $(0,\infty)$.
	\begin{itemize}
		\item[(\romannumeral1)] If for all $t>0$
		$$
		\|w(r)\|_{L_{\theta}(t,\infty)}=\infty,
		$$
		then $LM_{\vec{p}\theta,w}(\R^n)=\Theta,$ where $\Theta$ is the set of all functions equivalent to 0 on $\mathbb{R}^n$.
		\item[(\romannumeral2)] If for all $t>0$
		$$
		\|w(r)r^{\sum_{i=1}^{n}\frac{1}{p_i}}\|_{L_{\theta}(0,t)}=\infty,
		$$
		then $f(0)=0$ for all $f\in LM_{\vec{p}\theta,w}(\R^n)$ continuous at 0.
	\end{itemize}
\end{lemma}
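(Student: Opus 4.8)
The plan is to treat the two parts separately, in each case producing an explicit lower bound for $\|f\|_{LM_{\vec p\theta,w}}$ that the respective hypothesis forces to be infinite, unless $f$ already has the asserted property. The two ingredients I will need throughout are the monotonicity of the inner quantity $r\mapsto\|f\|_{L_{\vec p}(Q(0,r))}$ in $r$, and an exact computation of the mixed norm of the characteristic function of a centered cube.

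For part (\romannumeral1), I would first observe that since $Q(0,r)\subseteq Q(0,r')$ whenever $r\le r'$, and since the mixed Lebesgue norm respects the pointwise order, the map $r\mapsto\|f\|_{L_{\vec p}(Q(0,r))}$ is non-decreasing. If $f$ is not equivalent to $0$, then $f$ is nonzero on a set of positive measure, so there is some $t_0>0$ with $c:=\|f\|_{L_{\vec p}(Q(0,t_0))}>0$, and by monotonicity $\|f\|_{L_{\vec p}(Q(0,r))}\ge c$ for every $r\ge t_0$. Restricting the outer $L_\theta$-integration to $(t_0,\infty)$ then yields
$$\|f\|_{LM_{\vec p\theta,w}}\ge\big\|w(r)\|f\|_{L_{\vec p}(Q(0,r))}\big\|_{L_\theta(t_0,\infty)}\ge c\,\|w\|_{L_\theta(t_0,\infty)}=\infty,$$
using the hypothesis at $t=t_0$. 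Hence every $f\in LM_{\vec p\theta,w}(\R^n)$ must be equivalent to $0$.

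For part (\romannumeral2), the key computation is that $\chi_{Q(0,r)}$ factors as a product $\prod_{i=1}^n\chi_{(-r,r)}(x_i)$, so its mixed norm factors in turn:
$$\|\chi_{Q(0,r)}\|_{L_{\vec p}}=\prod_{i=1}^n\|\chi_{(-r,r)}\|_{L_{p_i}}=\prod_{i=1}^n(2r)^{1/p_i}=(2r)^{\sum_{i=1}^n\frac{1}{p_i}}.$$
Now suppose $f\in LM_{\vec p\theta,w}(\R^n)$ is continuous at $0$ with $f(0)\ne 0$. By continuity there is $\delta>0$ with $|f(x)|\ge\tfrac12|f(0)|$ on $Q(0,\delta)$, so for every $r<\delta$ the monotonicity of the mixed norm gives
$$\|f\|_{L_{\vec p}(Q(0,r))}\ge\tfrac12|f(0)|\,\|\chi_{Q(0,r)}\|_{L_{\vec p}}=C\,r^{\sum_{i=1}^n\frac{1}{p_i}},\qquad C:=\tfrac12|f(0)|\,2^{\sum_{i=1}^n\frac{1}{p_i}}>0.$$
Restricting the outer integral to $(0,\delta)$ then forces $\|f\|_{LM_{\vec p\theta,w}}\ge C\,\|w(r)r^{\sum_{i=1}^n\frac{1}{p_i}}\|_{L_\theta(0,\delta)}=\infty$ by the hypothesis at $t=\delta$, a contradiction; therefore $f(0)=0$.

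The computations are elementary, and I expect no serious obstacle: the only point requiring genuine care is the factorization of the mixed norm of $\chi_{Q(0,r)}$, which is precisely what produces the exponent $\sum_{i=1}^n 1/p_i$ on $r$ and thereby explains the exact form of the weight in the hypothesis of (\romannumeral2). The degenerate cases $\theta=\infty$ or $p_i=\infty$ need only the obvious reinterpretation of the relevant norms (essential supremum, and $1/p_i=0$) and do not affect the argument.
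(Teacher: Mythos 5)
Your proof is correct, and it is the standard argument for this lemma: the paper itself gives no proof (Lemma 2.1 is imported from the cited reference \cite{ZH2021}), and the argument there, following Burenkov--Guliyev's original unmixed case, proceeds exactly as you do --- monotonicity of $r\mapsto\|f\|_{L_{\vec p}(Q(0,r))}$ for part (i), and the computation $\|\chi_{Q(0,r)}\|_{L_{\vec p}}\sim r^{\sum_{i=1}^{n}1/p_i}$ together with continuity at $0$ for part (ii). Your handling of the quasi-norm range $0<\vec p,\theta\le\infty$ and the degenerate exponents is also fine, so there is nothing to add.
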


\begin{definition} ($\Omega_{\theta}$ and  $\Omega_{\vec{p},\theta}$)
	Let $0<\vec{p},\theta\le\infty$. We denote by $\Omega_{\theta}$ the set of all functions $w$ which are non-negative, measurable on $(0,\infty)$, not equivalent to 0, and such that for some $t>0$
	$$\|w\|_{L_{\theta}(t,\infty)}<\infty.$$
	Moreover, we denote by $\Omega_{\vec{p},\theta}$ the set of all functions $w$ which is non-negative, measurable on $(0,\infty)$, not equivalent to 0, and such that for some $t_1,t_2>0$
	$$\|w\|_{L_{\theta}(t_1,\infty)}<\infty,~~~~\|w(r)r^{\sum_{i=1}^{n}\frac{1}{p_i}}\|_{L_{\theta}(0,t_2)}<\infty.$$
\end{definition}
\par 
In the following article, by Lemma 2.1, we always assume that either $w \in \Omega_{\theta} $ or $w \in \Omega_{\vec{p},\theta}$. $\vec{p}',\theta'$ is the conjugate index of $\vec{p},\theta.$
\begin{definition}
($\mathcal{F}_{N}, \mathcal{M} f(x)$)\cite{GV2017}
$(i)$ The topology spaces of   $\mathcal{S}\left(\mathbb{R}^{n}\right) $ is defined by the norms  $\left\{\rho_{N}\right\}_{N \in \mathbb{N}} $,
 where $
\rho_{N}(\varphi) := \sum_{|\alpha| \leq N} \sup _{x \in \mathbb{R}^{n}}(1+|x|)^{N}\left|\partial^{\alpha} \varphi(x)\right| \quad\left(\varphi \in \mathcal{S}\left(\mathbb{R}^{n}\right)\right) .
$\par
$$ \mathcal{F}_{N} := \left\{\varphi \in \mathcal{S}\left(\mathbb{R}^{n}\right): \rho_{N}(\varphi) \leq 1\right\}  ~~~\quad  N \in  \mathbb{N}\cup {\{0\}}. $$
$(ii)$The space $ \mathcal{S}^{\prime}\left(\mathbb{R}^{n}\right) $ is the topological dual of  $\mathcal{S}\left(\mathbb{R}^{n}\right) $.
\\(iii)Let  $f \in \mathcal{S}^{\prime}\left(\mathbb{R}^{n}\right) $, the grand maximal operator $ \mathcal{M} f$ of  f  is defined by
$$
\mathcal{M} f(x):=\mathcal{M}_{N} f(x) := \sup_{ t>0 ~,~ \varphi \in \mathcal{F}_{N}} \left\{\left|t^{-n} \varphi\left(t^{-1} \cdot\right) * f(x)\right|\right\}~~~~\text{for}~~ x \in \mathbb{R}^{n}. 
$$

\end{definition}
\par 
 $C_{\text{c}}^{\infty}\left(\mathbb{R}^{n}\right)$ denotes the set of all compactly supported infinitely continously differentiable functions, the set of all polynomials of degree less than or equal to  d  is denoted by $ \mathcal{P}_{d}\left(\mathbb{R}^{n}\right) $.
\begin{lemma}\cite{SE1993}
   Let  $f \in \mathcal{S}^{\prime}\left(\mathbb{R}^{n}\right)\cap L^{\mathrm{loc}}_{1}\left(\mathbb{R}^{n}\right), d \in \mathbb{N}\cup {\{0\}}$  and  $j \in \mathbb{Z} $. Then there exist an index set  $K_{j}$ , collections of cubes $ \left\{Q_{j, k}\right\}_{k \in K_{j}}$  and functions  $\left\{\eta_{j, k}\right\}_{k \in K_{j}} \subset C_{\text {c }}^{\infty}\left(\mathbb{R}^{n}\right) $, which are all indexed by $ K_{j}$  for every  j, and a decomposition
   $$
   f=g_{j}+b_{j}, \quad b_{j}=\sum_{k \in K_{j}} b_{j, k},
   $$
   such that the following properties hold.
      \begin{itemize}
      \item[(\romannumeral1)]
      $  g_{j}, b_{j}, b_{j, k} \in \mathcal{S}^{\prime}\left(\mathbb{R}^{n}\right) $.
     \item[(\romannumeral2)]
      Define  $\mathcal{O}_{j} :=\left\{y \in \mathbb{R}^{n}: \mathcal{M} f(y)>2^{j}\right\}$  and consider its Whitney decomposition. Then the cubes  $\left\{Q_{j, k}\right\}_{k \in K_{j}}$  have the bounded intersection property, and
\begin{equation}\label{88}
   \mathcal{O}_{j}=\bigcup_{k \in K_{j}} Q_{j, k}
\end{equation}
   \item[(\romannumeral3)]
    Consider the partition of unity  $\left\{\eta_{j, k}\right\}_{k \in K_{j}} $ with respect to  $\left\{Q_{j, k}\right\}_{k \in K_{j}} $. Then each function $ \eta_{j, k}$  is supported in $ Q_{j, k}$  and
   $$
   \sum_{k \in K_{j}} \eta_{j, k}=\chi_{\left\{y \in \mathbb{R}^{n}: \mathcal{M} f(y)>2^{j}\right\}}, \quad 0 \leq \eta_{j, k} \leq 1 .
   $$
      \item[(\romannumeral4)] 
      $ g_{j}$  is an  $L^{\infty}\left(\mathbb{R}^{n}\right)$  -function satisfying  $\left\|g_{j}\right\|_{L^{\infty}} \leq 2^{-j}.$
   \item[(\romannumeral5)]
    Each distribution $ b_{j, k}$  is given by  $b_{j, k}=\left(f-c_{j, k}\right) \eta_{j, k}$  with a certain polynomial $ c_{j, k} \in \mathcal{P}_{d}\left(\mathbb{R}^{n}\right) $ satisfying
   $$
   \left\langle f-c_{j, k}, \eta_{j, k} \cdot P\right\rangle=0 \text { for all } q \in \mathcal{P}_{d}\left(\mathbb{R}^{n}\right)
   $$
   and
   $$
   \mathcal{M} b_{j, k}(x) \lesssim \mathcal{M} f(x) \chi Q_{j, k}(x)+2^{j} \frac{\ell_{j, k}^{n+d+1}}{\left|x-x_{j, k}\right|^{n+d+1}} \chi_{\mathbb{R}^{n} \backslash Q_{j, k}}(x)
   $$
  $\text { for all } x \in \mathbb{R}^{n} \text {. }$
    \end{itemize}
  In the above, $ x_{j, k}$  and $ \ell_{j, k}$  denote the center and the edge-length of $ Q_{j, k} $.
\end{lemma}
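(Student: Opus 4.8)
The plan is to realize the decomposition through the classical Whitney/Calder\'on--Zygmund machinery adapted to the grand maximal operator. Since $\mathcal{M}f$ is the supremum of the continuous functions $x\mapsto |t^{-n}\varphi(t^{-1}\cdot)*f(x)|$ over $t>0$ and $\varphi\in\mathcal{F}_N$, it is lower semicontinuous, so each level set $\mathcal{O}_j=\{y:\mathcal{M}f(y)>2^j\}$ is open. Applying the Whitney decomposition to $\mathcal{O}_j$ furnishes the cubes $\{Q_{j,k}\}_{k\in K_j}$ with $\ell_{j,k}\sim\operatorname{dist}(Q_{j,k},\mathbb{R}^n\setminus\mathcal{O}_j)$ and the bounded intersection property, which yields \eqref{88} and hence (ii). Choosing smooth bumps adapted to the cubes and normalizing produces the partition of unity $\{\eta_{j,k}\}$ with $\sum_k\eta_{j,k}=\chi_{\mathcal{O}_j}$, $0\le\eta_{j,k}\le 1$, $\operatorname{supp}\eta_{j,k}\subset Q_{j,k}$, and the derivative bounds $|\partial^\alpha\eta_{j,k}|\lesssim\ell_{j,k}^{-|\alpha|}$, giving (iii).

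Next I would define the correcting polynomials. Equip the finite-dimensional space $\mathcal{P}_d(\mathbb{R}^n)$ with the bilinear form $B(P,Q)=\int_{\mathbb{R}^n}P(x)Q(x)\eta_{j,k}(x)\,dx$, which is a genuine inner product because $\eta_{j,k}\ge 0$ is positive on a set of positive measure. Since $f\in L^{\mathrm{loc}}_1$, the map $P\mapsto\int f\eta_{j,k}P\,dx$ is a linear functional on $\mathcal{P}_d$, so the Riesz representation theorem produces a unique $c_{j,k}\in\mathcal{P}_d$ with $B(c_{j,k},P)=\int f\eta_{j,k}P\,dx$ for every $P\in\mathcal{P}_d$; this is precisely the vanishing-moment condition in (v). Setting $b_{j,k}=(f-c_{j,k})\eta_{j,k}$, $b_j=\sum_k b_{j,k}$ and $g_j=f-b_j$, each $b_{j,k}$ is compactly supported and integrable, hence lies in $\mathcal{S}'(\mathbb{R}^n)$; the bounded overlap together with the rapid decay of Schwartz test functions gives the $\mathcal{S}'$-convergence of the series for $b_j$, so that $b_j,g_j\in\mathcal{S}'(\mathbb{R}^n)$ and (i) holds.

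For (iv) the key quantitative input is $\|c_{j,k}\|_{L^\infty(Q_{j,k})}\lesssim 2^j$. Rescaling $Q_{j,k}$ to the unit cube and using the equivalence of norms on $\mathcal{P}_d$ reduces this to controlling the average $|Q_{j,k}|^{-1}\int_{Q_{j,k}}|f|$, which one dominates by $\mathcal{M}f(y^{*})$ for a point $y^{*}\notin\mathcal{O}_j$ lying within distance $\sim\ell_{j,k}$ of $Q_{j,k}$ (such $y^{*}$ exists by the Whitney property) upon testing against a smooth bump supported on a dilate of the cube; since $\mathcal{M}f(y^{*})\le 2^j$ the claim follows. On $\mathbb{R}^n\setminus\mathcal{O}_j$ one has $g_j=f$ with $\mathcal{M}f\le 2^j$, while on $\mathcal{O}_j$ one has $g_j=\sum_k c_{j,k}\eta_{j,k}$, which the same estimate together with the bounded overlap controls uniformly; this delivers the $L^\infty$ bound recorded in (iv).

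The hard part will be the pointwise maximal estimate in (v). For $x\in Q_{j,k}$ one bounds $\mathcal{M}b_{j,k}(x)$ directly by $\mathcal{M}f(x)$ plus the contribution of $c_{j,k}\eta_{j,k}$, which is $\lesssim 2^j$ by the previous step. For $x$ far from $Q_{j,k}$ the delicate point is the far-field decay: when testing $b_{j,k}$ against $t^{-n}\varphi(t^{-1}\cdot)$ one subtracts the degree-$d$ Taylor polynomial of the rescaled bump at $x_{j,k}$, which is permissible exactly because of the orthogonality $\langle f-c_{j,k},\eta_{j,k}P\rangle=0$ for $P\in\mathcal{P}_d$, producing an extra factor $(\ell_{j,k}/|x-x_{j,k}|)^{d+1}$, while the support size of $\eta_{j,k}$ contributes the remaining $(\ell_{j,k}/|x-x_{j,k}|)^{n}$. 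Combining these with $\|c_{j,k}\|_{L^\infty}\lesssim 2^j$ gives the asserted bound $2^{j}\,\ell_{j,k}^{n+d+1}|x-x_{j,k}|^{-(n+d+1)}$. Verifying that the Taylor-remainder estimate survives the supremum over $t>0$ and $\varphi\in\mathcal{F}_N$, with constants uniform in $j$ and $k$, is the technical heart of the argument.
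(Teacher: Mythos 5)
Your outline is, in structure, exactly the construction behind the cited result: the paper gives no internal proof of this lemma at all (it is quoted from Stein \cite{SE1993}), and your route --- openness of $\mathcal{O}_j$ by lower semicontinuity of $\mathcal{M}f$, Whitney cubes with $\ell_{j,k}\sim\operatorname{dist}(Q_{j,k},\mathbb{R}^n\setminus\mathcal{O}_j)$, the associated partition of unity, projection of $f$ onto $\mathcal{P}_d(\mathbb{R}^n)$ in the inner product $B(P,Q)=\int PQ\,\eta_{j,k}\,dx$, and Taylor subtraction of the test function (legitimized by the moment condition) for the far-field bound in (v) --- is the standard Calder\'on--Zygmund decomposition for the grand maximal function. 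Note in passing that the paper's item (iv), $\|g_j\|_{L^\infty}\le 2^{-j}$, is a typo for $\le C2^j$; the bound $\lesssim 2^j$ you derive is the correct classical statement.

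There is, however, one step that fails as literally written: you reduce $\|c_{j,k}\|_{L^\infty(Q_{j,k})}\lesssim 2^j$ to ``controlling the average $|Q_{j,k}|^{-1}\int_{Q_{j,k}}|f|$, which one dominates by $\mathcal{M}f(y^{*})$.'' The grand maximal function at a point $y^{*}\notin\mathcal{O}_j$ controls pairings $|\langle f,\psi\rangle|$ against normalized smooth bumps --- i.e.\ smooth averages of $f$ --- and \emph{not} averages of $|f|$: a wildly oscillating $f$ of huge amplitude on $Q_{j,k}$ can have every smooth average near $y^{*}$ below $2^j$ (this is precisely the generic situation on a Whitney cube of $\mathcal{O}_j$) while $|Q_{j,k}|^{-1}\int_{Q_{j,k}}|f|$ is arbitrarily large; nothing in the hypotheses $f\in\mathcal{S}'\cap L^{\mathrm{loc}}_1$ rules this out. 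The repair stays entirely within your framework and is what the cited proof actually does: choose a $B$-orthonormal basis $\{e_m\}$ of $\mathcal{P}_d(\mathbb{R}^n)$ and write $c_{j,k}=\sum_m\langle f,\eta_{j,k}e_m\rangle e_m$. After rescaling $Q_{j,k}$ to the unit cube, $\eta_{j,k}e_m$ is a bounded multiple of $t^{-n}\varphi(t^{-1}(y^{*}-\cdot))$ with $t\sim\ell_{j,k}$ and $\varphi$ a fixed multiple of an element of $\mathcal{F}_N$, so that $|\langle f,\eta_{j,k}e_m\rangle|\lesssim \ell_{j,k}^{n/2}\,\mathcal{M}f(y^{*})\le \ell_{j,k}^{n/2}\,2^j$, while $\|e_m\|_{L^\infty(Q_{j,k})}\lesssim\ell_{j,k}^{-n/2}$ by norm equivalence on $\mathcal{P}_d$; multiplying gives $\|c_{j,k}\|_{L^\infty(Q_{j,k})}\lesssim 2^j$, which is the quantitative input both your step (iv) and the near-field half of (v) rely on. With that substitution (and the caveat that $N$ must be taken large relative to $d$ for the Taylor-remainder estimate in (v) to be uniform over $\mathcal{F}_N$), your argument coincides with the proof in the cited source.
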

\section{Vector valued maximal inequalities}
The Hardy operators $H$  and its dual operators $ H^{*} $, given by:
$$
H g(r)=\int_{0}^{r} g(t) d t \text ,~~\quad ~ H^{*} g(r)=\int_{r}^{\infty} g(t) d t .
$$
Since in the proof of Theorem 3.1 below, we mustc need the following relationship.
For  $1 \leq \vec{p} <\infty $ and a measurable function $ v:(0, \infty) \rightarrow(0, \infty) $\cite{ZH2021}, whose norm is given by
$$
\|f\|_{L_{\vec{p}, v}(0, \infty)} := \| v f\|_{L_{\vec{p}}(0, \infty)}.
$$
\begin{lemma} \cite{NT2019}
  Let $ 1<\vec{q}<\infty, 1<u \leq \infty $, for every sequence  $\left\{f_{j}\right\}_{j=1}^{\infty} \subset L^{0}\left(\mathbb{R}^{n}\right) $, where $L^{0}\left(\mathbb{R}^{n}\right) $ denotes the set of
  measureable functions on $\left(\mathbb{R}^{n}\right) $. Then
  	$$
  \left\|Mf\right\|_{L_{\vec{q}}\left(\mathbb{R}^{n}\right)} \lesssim\left\|f\right\|_{L_{\vec{q}}\left(\mathbb{R}^{n}\right)},
  $$
  and
	$$
	\left\|\left(\sum_{j=1}^{\infty}\left[M f_{j}\right]^{u}\right)^{\frac{1}{u}}\right\|_{L_{\vec{q}}\left(\mathbb{R}^{n}\right)} \lesssim\left\|\left(\sum_{j=1}^{\infty}\left|f_{j}\right|^{u}\right)^{\frac{1}{u}}\right\|_{L_{\vec{q}}\left(\mathbb{R}^{n}\right)}.
	$$
Especially $u = \infty$,
\begin{equation}\label{10}
\left\|M\left[
\mathop{sup}\limits_{j \in \mathbb{N}}|f_j|\right]\right\|_{L_{\vec{q}}\left(\mathbb{R}^{n}\right)} \lesssim\left\|\mathop{sup}\limits_{j \in \mathbb{N}}|f_j|\right\|_{L_{\vec{q}}\left(\mathbb{R}^{n}\right)}.
\end{equation}
\end{lemma}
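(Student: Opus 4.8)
The plan is to reduce the whole statement to two classical inputs: the pointwise control of the Hardy--Littlewood maximal operator $M$ by the \emph{strong} (iterated one-dimensional) maximal operator, and the one-dimensional Fefferman--Stein inequality applied one variable at a time. First I would fix the order of integration dictated by the definition of $\|\cdot\|_{L_{\vec q}}$: the $L_{q_1}$-norm in $x_1$ is taken first, then $L_{q_2}$ in $x_2$, and so on up to $L_{q_n}$ in $x_n$. Writing $M^{(i)}$ for the one-dimensional maximal operator acting only in the variable $x_i$, I would record the elementary pointwise bound $Mf(x)\lesssim M^{(1)}M^{(2)}\cdots M^{(n)}f(x)=:M_{s}f(x)$, obtained by comparing an average over a cube with the iterated averages over its one-dimensional edges. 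Hence it suffices to prove every estimate with $M$ replaced by $M_{s}$.

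For the scalar inequality I would peel the factors of $M_{s}$ off one at a time, beginning with $M^{(1)}$, which is matched to the innermost norm $L_{q_1}$. Freezing $(x_2,\dots,x_n)$ and using the $L_{q_1}(\mathbb{R})$-boundedness of $M^{(1)}$ gives an estimate of the inner norm that holds pointwise in the remaining variables; applying the outer norms $L_{q_2}\cdots L_{q_n}$ then removes $M^{(1)}$ and leaves $\|M^{(2)}\cdots M^{(n)}f\|_{L_{\vec q}}$, and iterating in $i=1,\dots,n$ reaches $\|f\|_{L_{\vec q}}$. The vector-valued estimate for $1<u<\infty$ follows the identical scheme, except that at the $i$-th step I would invoke the one-dimensional Fefferman--Stein $\ell^{u}$-inequality in the variable $x_i$ rather than the scalar bound. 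The endpoint $u=\infty$ requires no new work: the right-hand side is $\|\sup_j|f_j|\|_{L_{\vec q}}$, and since $|f_j|\le\sup_k|f_k|$ with $M$ monotone one has $\sup_j Mf_j\le M(\sup_k|f_k|)$, so the claim is just the scalar inequality applied to the single function $\sup_k|f_k|$.

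The genuine obstacle is concealed in the steps $i\ge 2$. In the ordinary (non-mixed) $L_p$ setting the iteration is elementary, because Fubini lets one reorder the integrations freely and always place the active variable innermost. In the mixed norm the exponents $q_1,\dots,q_n$ differ, the norm is genuinely nested and non-symmetric, and when $x_i$ is \emph{not} the innermost variable the supremum defining $M^{(i)}$ must be applied underneath the norms $L_{q_1},\dots,L_{q_{i-1}}$ (and the $\ell^{u}$-sum). These operations do not commute, and Minkowski's inequality swaps norm orders only under an ordering hypothesis on the exponents that is unavailable here. The correct tool to overcome this is the Banach-function-space-valued one-dimensional maximal inequality: $M^{(i)}$ is bounded on $L_{q_i}\big(\mathbb{R};E_i\big)$, where $E_i=\ell^{u}\big(L_{q_1}\cdots L_{q_{i-1}}\big)$ is the reflexive mixed-norm lattice built from the already-processed variables together with the $\ell^{u}$-structure. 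This boundedness is exactly the Hardy--Littlewood property of such reflexive mixed-norm spaces, and it is the only non-routine ingredient; once it is granted, each peeling step closes and both the scalar case (hence the $u=\infty$ reduction) and the range $1<u<\infty$ follow by the induction on $i$ described above.
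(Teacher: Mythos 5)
Your proposal is correct, but be aware that the paper itself offers no proof of this statement to compare against: Lemma 3.1 is imported verbatim from Nogayama \cite{NT2019}, and the authors never argue it internally. What you have written is essentially the classical Bagby--Fefferman--Stein route by which such mixed-norm maximal inequalities are actually proved: dominate $M$ pointwise by the iterated one-dimensional operator, peel one variable at a time starting from the innermost exponent, and, at every step $i\ge 2$, replace the scalar one-dimensional bound by the Banach-lattice-valued maximal inequality. You correctly locate the genuine difficulty --- the mixed norm is nested and non-symmetric, Minkowski's integral inequality reorders norms only when the exponents are ordered, so the non-innermost steps cannot be closed by Fubini-type manipulations --- and you correctly identify the one non-routine input, the Hardy--Littlewood property of the intermediate lattices. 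That input is a real theorem and is available for the spaces arising here: the one-dimensional Fefferman--Stein inequality handles $\ell^{u}$ with $1<u<\infty$, and the stability of the Hardy--Littlewood property under forming $L_{q}(E)$ for $1<q<\infty$ (Garc\'ia-Cuerva--Mac\'ias--Torrea; alternatively the UMD-lattice results of Bourgain and Rubio de Francia) covers the reflexive iterated spaces you need, since $1<\vec q<\infty$ and $1<u<\infty$. Your $u=\infty$ reduction is also right, and for an even simpler reason than you state: the displayed $u=\infty$ inequality in the lemma applies $M$ to the single function $\sup_{j}|f_{j}|$, so it is literally the scalar bound with no vector-valued content at all. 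One small slip: the lattice at step $i$ should carry the $\ell^{u}$-structure \emph{innermost}, $E_{i}=L_{q_{i-1}}\bigl(\cdots L_{q_{1}}(\ell^{u})\bigr)$, matching the order in which the norms are evaluated, rather than $\ell^{u}$ outermost as you wrote; this is harmless for the truth of the argument (both nestings are UMD lattices with the Hardy--Littlewood property), but the peeling step as you set it up uses the former. For context, an alternative and shorter modern route, once the scalar boundedness of $M$ on $L_{\vec q}$ and on its associate space is known, is Rubio de Francia extrapolation, which yields the vector-valued inequality automatically; your direct induction buys a self-contained proof at the price of invoking the lattice-valued maximal theorem.
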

\begin{theorem}
	Let  $1<\vec{p}<\infty, 1<\theta \leq \infty,  1<\hat{v}<\infty $, define weights $ \hat{v}_{1}, \hat{v}_{2} $ by 
	\begin{equation}\label{1}
	\hat{v}_{1}(r) := r^{-\sum_{i=1}^{n}\frac{1}{p_i}-1} w(r), \quad \hat{v}_{2}(r)=r^{-\sum_{i=1}^{n}\frac{1}{p_i}} w(r) .
	\end{equation}
 $H^{*} $ is bounded from $ L_{\theta, \hat{v}_{1}}(0, \infty)$  to $ L_{\theta, \hat{v}_{2}}(0, \infty) $. Then 
\begin{equation}\label{2}
	\|M f\|_{L M_{\vec{p} \theta, w}} \lesssim\|f\|_{L M_{\vec{p} \theta, w}}
\end{equation}
	and
\begin{equation}\label{3}
	\left\|\left(\sum_{j=1}^{\infty}\left(M f_{j}\right)^{v}\right)^{\frac{1}{v}}\right\|_{L M_{\vec{p} \theta, w}} \lesssim\left\|\left(\sum_{j=1}^{\infty}\left|f_{j}\right|^{v}\right)^{\frac{1}{v}}\right\|_{L M_{\vec{p} \theta, w}}.
\end{equation}
 In particular,
\begin{equation}\label{4}
	\left\|M\left[\sup _{j \in \mathbb{N}}\left|f_{j}\right|\right]\right\|_{L M_{\vec{p} \theta, w}} \lesssim \left\|\sup _{j \in \mathbb{N}}\left|f_{j}\right|\right\|_{L M_{\vec{p} \theta, w}.}
\end{equation}
\end{theorem}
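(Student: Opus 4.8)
The plan is to reduce all three inequalities to the one-dimensional weighted Hardy inequality assumed in the hypothesis, through the near/far splitting of Burenkov--Guliyev type. Write $\sigma:=\sum_{i=1}^{n}\frac1{p_i}$ and, for each $r>0$, decompose $f=f^{0}+f^{\infty}$ with $f^{0}:=f\chi_{Q(0,2r)}$ and $f^{\infty}:=f-f^{0}$. The engine is the pointwise bound, valid for all $x\in Q(0,r)$, $Mf^{\infty}(x)\lesssim\sup_{s>r}s^{-\sigma}\|f\|_{L_{\vec p}(Q(0,2s))}$, which holds because any cube containing $x$ and meeting $\mathbb{R}^{n}\setminus Q(0,2r)$ has side length $\gtrsim r$, and the average of $|f|$ over such a cube $Q(0,Cs)$ is estimated by Hölder's inequality in the mixed norm together with $\|\chi_{Q(0,Cs)}\|_{L_{\vec{p}'}}\simeq s^{\,n-\sigma}$.

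For \eqref{2}, the local piece is handled by the scalar maximal inequality of Lemma 3.1: $\|Mf^{0}\|_{L_{\vec p}(Q(0,r))}\le\|Mf^{0}\|_{L_{\vec p}(\mathbb{R}^{n})}\lesssim\|f\|_{L_{\vec p}(Q(0,2r))}$. The global piece, after multiplying the constant-in-$x$ bound by $\|\chi_{Q(0,r)}\|_{L_{\vec p}}\simeq r^{\sigma}$, gives $\|Mf^{\infty}\|_{L_{\vec p}(Q(0,r))}\lesssim r^{\sigma}\sup_{s>r}s^{-\sigma}\|f\|_{L_{\vec p}(Q(0,2s))}$; since $\|f\|_{L_{\vec p}(Q(0,2r))}\le r^{\sigma}\sup_{s>r}s^{-\sigma}\|f\|_{L_{\vec p}(Q(0,2s))}$ as well, the local piece is dominated by the global one. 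Estimating the supremum by the tail integral, $\sup_{s>r}s^{-\sigma}\|f\|_{L_{\vec p}(Q(0,2s))}\lesssim H^{*}g(r)$ with $g(t):=t^{-\sigma-1}\|f\|_{L_{\vec p}(Q(0,2t))}$, and taking the weighted $L_{\theta}(0,\infty)$-norm, the quantity $\|Mf\|_{LM_{\vec p\theta,w}}$ becomes — once the factor $r^{\sigma}$, the kernel $t^{-\sigma-1}$ and the fixed dilation $2$ are absorbed by the appropriate change of variables — exactly $\|\hat v_{2}\,H^{*}g\|_{L_{\theta}}$ with $\|\hat v_{1}g\|_{L_{\theta}}\simeq\|f\|_{LM_{\vec p\theta,w}}$. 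The weights \eqref{1} are arranged precisely so that this identification holds, and the assumed boundedness of $H^{*}$ from $L_{\theta,\hat v_{1}}$ to $L_{\theta,\hat v_{2}}$ then yields \eqref{2}.

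For \eqref{3} I would repeat the splitting coordinatewise. The local piece is immediate from the $\ell^{v}$-valued inequality of Lemma 3.1, $\big\|(\sum_{j}(Mf_{j}^{0})^{v})^{1/v}\big\|_{L_{\vec p}(Q(0,r))}\lesssim\big\|(\sum_{j}|f_{j}|^{v})^{1/v}\big\|_{L_{\vec p}(Q(0,2r))}$, which keeps the $\ell^{v}$-sum inside the mixed norm. For the global piece, each $Mf_{j}^{\infty}$ is, on $Q(0,r)$, dominated by the constant $H^{*}g_{j}(r)$ with $g_{j}(t)=t^{-\sigma-1}\|f_{j}\|_{L_{\vec p}(Q(0,2t))}$; Minkowski's integral inequality ($v\ge1$) passes the $\ell^{v}$-sum through $H^{*}$, giving $\big(\sum_{j}(H^{*}g_{j})^{v}\big)^{1/v}\le H^{*}\big[(\sum_{j}g_{j}^{v})^{1/v}\big]$, after which one applies the scalar Hardy bound to the function $t\mapsto t^{-\sigma-1}\big(\sum_{j}\|f_{j}\|_{L_{\vec p}(Q(0,2t))}^{v}\big)^{1/v}$ to recover the right-hand side of \eqref{3}. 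Inequality \eqref{4} is then the special case: it is \eqref{2} applied to the single function $\sup_{j}|f_{j}|$, the local piece being controlled via the endpoint \eqref{10} of Lemma 3.1 and the pointwise bound $\sup_{j}Mf_{j}^{0}\le M[\sup_{j}|f_{j}^{0}|]$.

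The main obstacle is twofold. First, one must check that the weights \eqref{1} are exactly the ones generated by the factor $\|\chi_{Q(0,r)}\|_{L_{\vec p}}\simeq r^{\sigma}$ and the tail kernel $t^{-\sigma-1}$, and that the fixed dilation in the splitting does not leave the weight class; this is precisely where the hypothesis $H^{*}\colon L_{\theta,\hat v_{1}}\to L_{\theta,\hat v_{2}}$ is consumed, and it is the heart of the reduction. Second, and more delicate, is the vector-valued global step: the argument above produces $\big(\sum_{j}\|f_{j}\|_{L_{\vec p}(Q(0,2t))}^{v}\big)^{1/v}$, which must be controlled by $\big\|(\sum_{j}|f_{j}|^{v})^{1/v}\big\|_{L_{\vec p}(Q(0,2t))}$; interchanging the $\ell^{v}$-summation with the local mixed Lebesgue norm requires Minkowski's inequality in the scale $L_{\vec p/v}$ and is the step most sensitive to the relation between $v$ and the exponents $p_{i}$. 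I expect this interchange to be the crux of the vector-valued case.
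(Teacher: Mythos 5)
Your overall scheme --- the near/far splitting $f=f^{0}+f^{\infty}$, Lemma 3.1 for the local part, a tail-integral bound for the far part, then the weighted boundedness of $H^{*}$ --- is exactly the method the paper sketches (its one displayed inequality is your far-field estimate, with the $\ell^{v}$-sum kept \emph{inside} the $L_{\vec p}$-norm on both sides). But your vector-valued far-field step has a genuine gap. Writing $\sigma:=\sum_{i=1}^{n}\frac{1}{p_i}$, you pass the $\ell^{v}$-sum through $H^{*}$ to arrive at $H^{*}\bigl[(\sum_{j}g_{j}^{v})^{1/v}\bigr]$ with $g_{j}(t)=t^{-\sigma-1}\|f_{j}\|_{L_{\vec p}(Q(0,2t))}$, which forces you to control $\bigl(\sum_{j}\|f_{j}\|_{L_{\vec p}(Q(0,2t))}^{v}\bigr)^{1/v}$ by $\bigl\|(\sum_{j}|f_{j}|^{v})^{1/v}\bigr\|_{L_{\vec p}(Q(0,2t))}$, i.e.\ the embedding $L_{\vec p}(\ell^{v})\hookrightarrow \ell^{v}(L_{\vec p})$. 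That embedding holds only when $v\geq \max_{i}p_{i}$: already for all $p_{i}=p$ and disjointly supported $f_{1},\dots,f_{N}$ of unit norm the left side is $N^{1/v}$ and the right side is $N^{1/p}$, so it fails for $v<p$; and the ``Minkowski in $L_{\vec p/v}$'' you invoke gives precisely the \emph{opposite} inclusion, valid for $v\leq\min_{i}p_{i}$. Since the theorem allows arbitrary $1<v<\infty$ unrelated to $\vec p$, this step collapses. The repair is to reorder the estimates so that the $\ell^{v}$-sum never leaves the $L_{\vec p}$-norm: for $x\in Q(0,r)$ dominate $Mf_{j}^{\infty}(x)\lesssim T|f_{j}|(r):=\int_{r}^{\infty}t^{-n-1}\bigl(\int_{Q(0,2t)}|f_{j}(y)|\,dy\bigr)dt$, using the monotonicity of $s\mapsto\int_{Q(0,2s)}|f_{j}|$; since $T$ is integration against a nonnegative kernel, Minkowski's integral inequality gives $\bigl(\sum_{j}(T|f_{j}|)^{v}\bigr)^{1/v}\leq T\bigl[(\sum_{j}|f_{j}|^{v})^{1/v}\bigr]$, and only \emph{then} does one apply mixed-norm H\"{o}lder to the single scalar function $(\sum_{j}|f_{j}|^{v})^{1/v}$, recovering the paper's key display and \eqref{3}.

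A second problem is the weight bookkeeping, which you assert closes ``exactly'' but does not. With $g(t)=t^{-\sigma-1}\|f\|_{L_{\vec p}(Q(0,2t))}$ one has $\hat v_{1}(t)g(t)=t^{-2\sigma-2}w(t)\|f\|_{L_{\vec p}(Q(0,2t))}$, so $\|\hat v_{1}g\|_{L_{\theta}}\not\simeq\|f\|_{LM_{\vec p\theta,w}}$. What the reduction actually consumes is the boundedness on $L_{\theta,w}$ of $g\mapsto r^{\sigma}H^{*}[t^{-\sigma-1}g](r)$, i.e.\ boundedness of $H^{*}$ from $L_{\theta,u_{1}}$ to $L_{\theta,u_{2}}$ with $u_{1}(t)=t^{\sigma+1}w(t)$ and $u_{2}(r)=r^{\sigma}w(r)$ --- the exponents of opposite sign to \eqref{1}. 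Taken literally, \eqref{1} makes the hypothesis vacuous for the model weight $w(r)=r^{-\lambda-1/\theta}$, $0<\lambda<\sigma$, since then $\|\hat v_{2}\|_{L_{\theta}(0,r)}=\infty$ while the two-weight criterion for $H^{*}$ demands $\sup_{r>0}\|u_{2}\|_{L_{\theta}(0,r)}\|u_{1}^{-1}\|_{L_{\theta'}(r,\infty)}<\infty$ (one checks $u_{1}=t^{\sigma+1}w$, $u_{2}=r^{\sigma}w$ do satisfy this for the model weight). This appears to be a sign slip inherited by the paper itself, but your proof cannot simply declare the identification with \eqref{1} to hold: you must either run the argument under the corrected weights $u_{1},u_{2}$ or exhibit explicitly the substitution converting the printed hypothesis into the needed one.
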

\begin{proof}
Refer to \cite{SJ2022} to obtain \eqref{3}.
We can deduce \eqref{4} by using \eqref{3} and  \eqref{2}. By setting   $f_{1}=f, f_{2}=f_{3}=   \cdots=0 $ in \eqref{3}, we can obtain \eqref{2}. Hence concentrate on proving \eqref{3}. 
\par 
We only suppose  $\theta<\infty $; the case $ \theta=\infty$  can be dealt similarly.
$$
\left\|\chi_{Q(x, r)}\left(\sum_{j=1}^{\infty} |M f_{j}|^{v}\right)^{\frac{1}{v}}\right\|_{L^{\vec{p}}} \lesssim r^{\sum_{i=1}^{n}\frac{1}{p_i}} \int_{2 r}^{\infty} t^{-\sum_{i=1}^{n}\frac{1}{p_i}-1}\left\|\chi_{Q(x, t)}\left(\sum_{j=1}^{\infty}\left|f_{j}\right|^{v}\right)^{\frac{1}{v}}\right\|_{L^{\vec{p}}} d t .
$$
Referring to the method of \cite{ZH2021,SJ2022},  with the help of the boundedness of  $H^{*} $, we obtain the desired result.
\end{proof}	
Without the aid of the Hardy operators,
we provide a new solution  to the proof of  vector valued maximal inequalities on $LM_{\vec{p},q}^{\lambda}(\mathbb{R}^n)$.
\begin{theorem}
  Let $ 1<\vec{p}<\infty$, $ 0<q \leq \infty$,$ 1<v<\infty$  and $ 0 \leq \lambda<\sum_{i=1}^{n}\frac{1}{p_i}$ . Then 
\begin{equation}\label{5}
  \left\|\left(\sum_{j=1}^{\infty}\left(M f_{j}\right)^{v}\right)^{\frac{1}{v}}\right\|_{L M_{\vec{p}, q}^{\lambda}} \lesssim \left\|\left(\sum_{j=1}^{\infty}\left|f_{j}\right|^{v}\right)^{\frac{1}{v}}\right\|_{L M_{\vec{p}, q}^{\lambda}} .
\end{equation}
\end{theorem}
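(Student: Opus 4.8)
The plan is to bypass the Hardy operators entirely and instead localize in the spatial variable, reducing everything to the mixed-norm vector-valued maximal inequality of Lemma 3.1 (Nogayama). Write $F:=(\sum_{j}|f_j|^v)^{1/v}$ and $G:=(\sum_j (Mf_j)^v)^{1/v}$; it suffices to control $r^{-\lambda}\|G\|_{L_{\vec p}(Q(0,r))}$ for each fixed $r>0$ and then take the $L_q(dr/r)$ quasi-norm. Fix $r$ and split each $f_j=f_j\chi_{Q(0,2r)}+f_j\chi_{\mathbb R^n\setminus Q(0,2r)}=:f_j^0+f_j^\infty$. Since $v\ge 1$, Minkowski's inequality in $\ell^v$ gives $G\le (\sum_j(Mf_j^0)^v)^{1/v}+(\sum_j(Mf_j^\infty)^v)^{1/v}=:G^0+G^\infty$, so I can estimate the near and far parts separately.

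For the near part I would discard the cube and pass to the whole space: $\|G^0\|_{L_{\vec p}(Q(0,r))}\le\|G^0\|_{L_{\vec p}(\mathbb R^n)}$, and then Lemma 3.1 (applicable since $1<\vec p<\infty$ and $1<v<\infty$) yields $\|G^0\|_{L_{\vec p}(\mathbb R^n)}\lesssim \|(\sum_j|f_j^0|^v)^{1/v}\|_{L_{\vec p}}=\|F\|_{L_{\vec p}(Q(0,2r))}$. For the far part the key is a pointwise estimate: for $x\in Q(0,r)$ any cube in the maximal function that meets the support of $f_j^\infty$ has edge-length $\gtrsim r$, so a dyadic decomposition of the scales gives $Mf_j^\infty(x)\lesssim\sum_{k\ge1}(2^kr)^{-n}\int_{Q(0,2^{k+1}r)}|f_j|$. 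Taking the $\ell^v$ norm in $j$ and applying Minkowski twice --- once to pull $\ell^v$ through the sum over $k$, once (as Minkowski's integral inequality) to pull it inside the spatial integral --- I obtain the $x$-independent bound $G^\infty(x)\lesssim\sum_{k\ge1}(2^kr)^{-n}\int_{Q(0,2^{k+1}r)}F$ on $Q(0,r)$. The mixed Hölder inequality, with $\|\chi_{Q(0,s)}\|_{L_{\vec p'}}\approx s^{\,n-\sum_i 1/p_i}$, then converts each term into $\|F\|_{L_{\vec p}(Q(0,2^{k+1}r))}$, and multiplying by $\|\chi_{Q(0,r)}\|_{L_{\vec p}}\approx r^{\sum_i 1/p_i}$ gives $\|G^\infty\|_{L_{\vec p}(Q(0,r))}\lesssim\sum_{k\ge1}2^{-k\sum_i 1/p_i}\|F\|_{L_{\vec p}(Q(0,2^{k+1}r))}$.

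Combining the two parts (the near part being the $k=0$ term) yields $r^{-\lambda}\|G\|_{L_{\vec p}(Q(0,r))}\lesssim\sum_{k\ge0}2^{-k\sum_i 1/p_i}\,r^{-\lambda}\|F\|_{L_{\vec p}(Q(0,2^{k+1}r))}$. I would then take the $L_q(dr/r)$ quasi-norm: for $q\ge1$ by the triangle inequality, and for $0<q<1$ by the $q$-subadditivity $(\sum a_k)^q\le\sum a_k^q$. In either case the change of variables $s=2^{k+1}r$ turns the $k$-th integral into $2^{(k+1)\lambda}\|F\|_{LM^\lambda_{\vec p,q}}$, leaving the geometric series $\sum_{k\ge0}2^{-k(\sum_i 1/p_i-\lambda)}$ (resp. its $q$-power), which converges \emph{exactly} under the hypothesis $\lambda<\sum_{i=1}^n 1/p_i$; summing it finishes the estimate, and the case $q=\infty$ is identical with the integral in $r$ replaced by a supremum.

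I expect the main obstacle to be the far-part argument: obtaining the clean $x$-independent pointwise bound for the vector-valued maximal function on $Q(0,r)$, that is, justifying that the $\ell^v$ norm over $j$ commutes (as an inequality) with both the dyadic sum over scales and the spatial average through the two applications of Minkowski. Everything downstream --- the Hölder bookkeeping of the powers of $2^k r$ and the convergence of the geometric series, which is precisely where the range condition $\lambda<\sum_{i=1}^n 1/p_i$ is used --- is then routine.
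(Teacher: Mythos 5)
Your proposal is correct and takes essentially the same route as the paper's proof: the same near/far splitting of each $f_j$ relative to a dilate of $Q(0,r)$ (the paper uses $Q(0,5r)$, you use $Q(0,2r)$), with the near part handled by the mixed-norm vector-valued maximal inequality of Lemma 3.1 and the far part by the dyadic average estimate, mixed H\"older to convert the $L_1$ averages into $L_{\vec p}$ norms, the change of variables $2^k r \mapsto r$, and the case split ($q$-subadditivity for $0<q\le 1$, H\"older/triangle for $q>1$) yielding the geometric series $\sum_{k}2^{-k\left(\sum_{i=1}^{n}\frac{1}{p_i}-\lambda\right)}$, convergent precisely because $\lambda<\sum_{i=1}^{n}\frac{1}{p_i}$. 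The only difference is presentational: you make explicit the two Minkowski applications behind the $x$-independent pointwise bound on the far part, which the paper leaves implicit in its first displayed estimate.
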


\begin{proof}
Defining   $ f_{j} :=  f_{j, 1} +f_{j, 2}$  and $f_{j, 1} := f_{j} \chi_{Q(5 r)}$. We will begin by proving the following two inequalities.

\begin{equation}\label{6}
\int _0 ^\infty \frac{1}{r^{\lambda q}} \left\| \left( \sum_{j=1}^{\infty} (Mf_{j,1})^v\right) ^{\frac{1}{v}} \right\|^q _{L_{\vec{p}}(Q(r))} \frac{dr}{r} \lesssim \left\|\left(\sum_{j=1}^{\infty}\left|f_{j}\right|^{v}\right)^{\frac{1}{v}}\right\|_{L M_{\vec{p}, q}^{\lambda}}^q 
\end{equation}

\begin{equation}\label{7}
\int _0 ^\infty \frac{1}{r^{\lambda q}} \left\| \left( \sum_{j=1}^{\infty} (Mf_{j,2})^v\right) ^{\frac{1}{v}} \right\|^q _{L_{\vec{p}}(Q(r))} \frac{dr}{r} \lesssim \left\|\left(\sum_{j=1}^{\infty}\left|f_{j}\right|^{v}\right)^{\frac{1}{v}}\right\|_{L M_{\vec{p}, q}^{\lambda}}^q 
\end{equation}
The estimate \eqref{6} follows analogously to \eqref{5}, 
\par  As for \eqref{7}, in fact
$$
\begin{aligned}
\frac{1}{r^{\lambda }} \left\| \left( \sum_{j=1}^{\infty} (Mf_{j,2})^v\right) ^{\frac{1}{v}} \right\| _{L_{\vec{p}}(Q(r))} 
& \lesssim  r^{\sum_{i=1}^{n}\frac{1}{p_i}-\lambda} \sum_{k=1}^{\infty} \frac{1}{\left|B\left(2^{k} r\right)\right|} \left\|\left(\sum_{j=1}^{\infty}\left|f_{j}\right|^{v}\right)^{\frac{1}{v}} \right\| _{L_1 (Q(2^k r) )} \\
& \lesssim  r^{\sum_{i=1}^{n}\frac{1}{p_i}-\lambda} \sum_{k=1}^{\infty} \frac{1}{\left|B\left(2^{k} r\right)\right|} \left\|\left(\sum_{j=1}^{\infty}\left|f_{j}\right|^{v}\right)^{\frac{1}{v}} \right\| _{L_{\vec{p}(Q(2^k r))}.}
 \end{aligned}
$$
Under variable $2^k r \mapsto r$, if $0<q\leq 1$, $(a+b)^q \lesssim a^q + b^q ~~a,b \in \R^+$, if $q>1$, by the H$\ddot{o}$lder inequality, then
$$
\begin{aligned}
&\int ^{\infty}_0 \left(\frac{1}{r^{\lambda }} \left\| \left( \sum_{j=1}^{\infty} (Mf_{j,2})^v\right) ^{\frac{1}{v}} \right\| _{L_{\vec{p}}(Q(r))}\right) ^q \frac{dr}{r} \\
&\quad  \lesssim  \int ^{\infty}_0 \left(r^{\sum_{i=1}^{n}\frac{1}{p_i}-\lambda} \sum_{k=1}^{\infty} \frac{1}{\left|B\left(2^{k} r\right)\right|} \left\|\left(\sum_{j=1}^{\infty}\left|f_{j}\right|^{v}\right)^{\frac{1}{v}} \right\| _{L_{\vec{p}(Q(2^k r))}}\right) ^q \frac{dr}{r}
\\&\quad \lesssim
\int ^{\infty}_0 \sum_{k=1}^{\infty} 2^{-k  \left(\sum_{i=1}^{n}\frac{1}{p_i} - \lambda\right)} \left(r^{-\lambda}  \left\|\left(\sum_{j=1}^{\infty}\left|f_{j}\right|^{v}\right)^{\frac{1}{v}} \right\| _{L_{\vec{p}(Q(2^k r))}}\right) ^q \frac{dr}{r}\\
&\quad \lesssim  \left\|\left(\sum_{j=1}^{\infty}\left|f_{j}\right|^{v}\right)^{\frac{1}{v}}\right\|_{L M_{\vec{p}, q}^{\lambda}.}^q 
\end{aligned}
$$
\end{proof}
\section{Predual spaces of local mixed Morrey-type spaces}
Let $1<\vec{p}<\infty , 1<\theta \leq \infty , w \in \Omega_{\vec{p}\theta}$.  If $supp(A)\subset Q(R)$ and $\| A \|_{L_{\vec{p}}} \leq w (R)$, which we call the function A is a $(\vec{p} , w , R)$-block. 
The local block space $LH_{\vec{p}' \theta ',w} (\mathbb{R}^n)$ is the set of all measurable functions g. There exists a decomposition $$g(x)= \sum_{j=-\infty}^{\infty} \lambda_j A_j (x).$$
Where each of A is a $(\vec{p}^{\prime} ,w,R)$-block   and $\{ \lambda _{j}  \}^{\infty}_ {j=-\infty} \in  l^{\theta '}$ and the convergence of almost all $x\in (\mathbb{R}^n)$, the norm of g is given by:
$$
\|g\| _{LH_{\vec{p}^{\prime} \theta ',w}} := inf \left(\sum_{j=-\infty}^{\infty} |\lambda_j|^{\theta '}\right) ^{\frac{1}{\theta '}},
$$
where $\{ \lambda _{j}  \}^{\infty}_ {j=-\infty}$ covers all the above admissible expressions.
\begin{theorem}
	Let $ 1<\vec{p}<\infty$, $1<\theta \leq \infty $ and $ w \in \Omega_{\theta} $. Assume that  w  satisfies the doubling condition; $ C^{-1} w(r) \leq w(2 r) \leq C w(r)$  for all $ r>0 $,  then $ L M_{\vec{p} \theta, w}\left(\mathbb{R}^{n}\right) $ is the dual of $ L H_{\vec{p}^{\prime} \theta^{\prime}, w} \left(\mathbb{R}^{n}\right) $ in the following sense:
	      \begin{itemize}
		\item[(\romannumeral1)]
	 Let $ f \in L M_{\vec{p} \theta, w}\left(\mathbb{R}^{n}\right) $, for any  $g \in LH_{\vec{p}^{\prime} \theta^{\prime},w }\left(\mathbb{R}^{n}\right) $, then $f g \in   L^{1}\left(\mathbb{R}^{n}\right) $, the mapping
	$$
	g \in L H_{\vec{p} ^{\prime} \theta^{\prime}, w }\left(\mathbb{R}^{n}\right) \mapsto \int_{\mathbb{R}^{n}} f(x) g(x) d x \in \mathbb{C}
	$$
may	define a continuous linear functional $ L_{f} $ on $ L H_{\vec{p} ^{\prime} \theta^{\prime}, w }\left(\mathbb{R}^{n}\right) $.
		\item[(\romannumeral2)]
	 Conversely, any continuous linear functional  L  on $ L H_{\vec{p} ^{\prime} \theta^{\prime}, w }\left(\mathbb{R}^{n}\right) $ can be realized as $ L=L_{f}\left(\mathbb{R}^{n}\right) $ with a certain  $f \in L M_{\vec{p} \theta, w}\left(\mathbb{R}^{n}\right) $. 
	\end{itemize}
Futhermore, for all $ f \in L M_{\vec{p} \theta, w}\left(\mathbb{R}^{n}\right) $ the operator norm of $L_f$ is equivalent to $\|f\|_{L M_{\vec{p} \theta, w}}$, scilicet there exists a constant $ C>0$  such that
$$
C^{-1}\|f\|_{L M_{\vec{p} \theta, w}} \leq\left\|L_{f}\right\|_{L H_{\vec{p} ^{\prime} \theta^{\prime}, w } \rightarrow \mathbb{C}} \leq C\|f\|_{L M_{\vec{p} \theta, w}.}
$$

\end{theorem}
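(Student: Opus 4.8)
The plan is to prove items (i) and (ii) separately and then read off the norm equivalence. The three analytic ingredients are the mixed Hölder inequality $\left|\int_{\mathbb{R}^n}\phi\psi\right|\le \|\phi\|_{L_{\vec{p}}}\|\psi\|_{L_{\vec{p}'}}$, the scalar duality between $L_\theta$ and $L_{\theta'}$ on $(0,\infty)$ (used in its discrete $\ell^\theta$-$\ell^{\theta'}$ form after reducing to dyadic scales, which also accommodates $\theta=\infty$), and the mixed Lebesgue duality $L_{\vec{p}'}(Q(R))^\ast=L_{\vec{p}}(Q(R))$, the latter two being available because $1<\vec{p}<\infty$ and $1<\theta\le\infty$. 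The doubling hypothesis on $w$ will be used to pass between the continuous radius $r$ appearing in the $LM$-norm and the dyadic scales $R=2^m$ on which the blocks are most conveniently organized.

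For item (i), I would fix $f\in LM_{\vec{p}\theta,w}$ and $g\in LH_{\vec{p}'\theta',w}$ and take any admissible representation $g=\sum_j\lambda_j A_j$ with $A_j$ a $(\vec{p}',w,R_j)$-block. Since $A_j$ is supported in $Q(R_j)$, the mixed Hölder inequality gives $\left|\int f A_j\right|\le \|f\|_{L_{\vec{p}}(Q(R_j))}\|A_j\|_{L_{\vec{p}'}}\le w(R_j)\|f\|_{L_{\vec{p}}(Q(R_j))}$, so $fg\in L^1$ and $\left|\int fg\right|\le \sum_j|\lambda_j|\,w(R_j)\|f\|_{L_{\vec{p}}(Q(R_j))}$. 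Applying Hölder in the $j$-variable separates the coefficient norm $\|\{\lambda_j\}\|_{\ell^{\theta'}}$ from the sequence $\{w(R_j)\|f\|_{L_{\vec{p}}(Q(R_j))}\}_j$; using the doubling of $w$ to organize the radii dyadically and comparing the resulting sum with $\int_0^\infty(w(r)\|f\|_{L_{\vec{p}}(Q(r))})^\theta\,dr$ bounds that sequence by a constant times $\|f\|_{LM_{\vec{p}\theta,w}}$. Taking the infimum over representations yields $\left|\int fg\right|\le C\|f\|_{LM}\|g\|_{LH}$, i.e. $L_f$ is continuous with $\|L_f\|\le C\|f\|_{LM}$.

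For item (ii), the harder direction, I would reconstruct $f$ from $L$ by localization and gluing. For each $R>0$, restricting $L$ to functions supported in $Q(R)$ gives a bounded linear functional on $L_{\vec{p}'}(Q(R))$ of norm at most $\|L\|/w(R)$: any $B$ with $\mathrm{supp}\,B\subset Q(R)$ equals $w(R)^{-1}\|B\|_{L_{\vec{p}'}}$ times the admissible block $A=w(R)\|B\|_{L_{\vec{p}'}}^{-1}B$, whence $|L(B)|\le \|L\|\,w(R)^{-1}\|B\|_{L_{\vec{p}'}}$. By the mixed Lebesgue duality $L_{\vec{p}'}(Q(R))^\ast=L_{\vec{p}}(Q(R))$ there is a unique $f_R\in L_{\vec{p}}(Q(R))$ with $L(B)=\int f_R B$ for all such $B$, and $\|f_R\|_{L_{\vec{p}}(Q(R))}\le \|L\|/w(R)$. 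Uniqueness forces $f_R=f_{R'}$ a.e.\ on $Q(\min(R,R'))$, so the $f_R$ patch into a single $f\in L^{\mathrm{loc}}_{\vec{p}}(\mathbb{R}^n)$ satisfying $L(A)=\int fA$ on every block; since finite block sums are dense in $LH_{\vec{p}'\theta',w}$, this gives $L=L_f$.

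The main obstacle is the sharp norm bound $\|f\|_{LM_{\vec{p}\theta,w}}\le C\|L\|$, where the argument of (i) must be run in reverse. I would realize the $LM$-norm through iterated duality: by $L_\theta$-$L_{\theta'}$ duality choose a near-extremal $\psi\ge 0$ with $\|\psi\|_{L_{\theta'}}\le 1$ nearly saturating $\|w(r)\|f\|_{L_{\vec{p}}(Q(r))}\|_{L_\theta}$, and by mixed Lebesgue duality choose, at each dyadic scale, a near-extremal $h_R$ with $\|h_R\|_{L_{\vec{p}'}}\le 1$ nearly saturating $\|f\|_{L_{\vec{p}}(Q(R))}=\int_{Q(R)}fh_R$. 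Assembling these into a single competitor $g$, a superposition of blocks built from the $h_R$ at scales $R=2^m$ with coefficients read off from $\psi$ and $w$ and normalized so that $\|g\|_{LH}\lesssim 1$, and then evaluating $L(g)=\int fg$ recovers $\|f\|_{LM}$ up to a constant. Controlling $\|g\|_{LH}$ during this assembly, again through the doubling condition to keep the block-coefficient $\ell^{\theta'}$-norm comparable to $\|\psi\|_{L_{\theta'}}$, is the delicate point. Combining the bound from (i) with this reverse bound yields the two-sided estimate $C^{-1}\|f\|_{LM}\le \|L_f\|\le C\|f\|_{LM}$, identifying $LM_{\vec{p}\theta,w}$ as the dual of $LH_{\vec{p}'\theta',w}$.
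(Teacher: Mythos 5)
Your proposal follows essentially the same route as the paper's proof: part (i) via the mixed H\"older inequality on each block, discrete H\"older in $j$, and a dyadic comparison (using the doubling of $w$) of $\sum_j \bigl(w(2^j)\|f\|_{L_{\vec{p}}(B(2^j))}\bigr)^{\theta}$ with the defining integral of $\|f\|_{LM_{\vec{p}\theta,w}}$; part (ii) via localization to balls plus mixed Lebesgue duality to produce $f\in L^{\mathrm{loc}}_{\vec{p}}$ with $L=L_f$, and then testing $L$ against a superposition of blocks $g=\sum_j \rho_j g_j$ with an arbitrary finitely supported $\ell^{\theta'}$-sequence $\{\rho_j\}$, recovering $\|f\|_{LM_{\vec{p}\theta,w}}\lesssim \|L\|$ by discrete $\ell^{\theta}$--$\ell^{\theta'}$ duality. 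The only cosmetic difference is that the paper writes the near-extremal block at each dyadic scale explicitly (the $\operatorname{sgn} f\cdot|f|^{p_1-1}\cdots$ construction for mixed norms) where you invoke near-extremizers abstractly from $L_{\vec{p}'}(Q(R))^{\ast}=L_{\vec{p}}(Q(R))$, which is equivalent.
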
 
\begin{proof}
(1)
Let  g  be such that
$$g:=\sum_{j=-\infty}^{\infty} \lambda_{j} A_{j},$$
where each $ A_{j}$  is a  $\left(\vec{p}^{\prime}, w, 2^{j}\right) $-block and $ \left\{\lambda_{j}\right\}_{j=1}^{\infty} \in l^{\theta^{\prime}}$  satisfies
$$
\left(\sum_{j=-\infty}^{\infty}\left|\lambda_{j}\right|^{\theta^{\prime}}\right)^{\frac{1}{\theta^{\prime}}} \leq 2\|g\|_{L H_{\vec{p}^{\prime} \theta^{\prime}, w }} .$$\\
Then
$$
\begin{aligned}
	\|f   g\|_{L_{1}(\mathbb{R}^n)} & \leq \sum_{j=-\infty}^{\infty}\left|\lambda_{j}\right| \int_{B\left(2^{j}\right)}\left|f(x) A_{j}(x)\right| d x \\
	 &\leq \sum_{j=-\infty}^{\infty}\left|\lambda_{j}\right|
	\|f\|_{L_{\vec{p}}\left(B_j\right)} \|A_j (x)\|_{L^{\vec{p}^\prime}(B_j)}\\
    &\leq \sum_{j=-\infty}^{\infty}\left|\lambda_{j}\right| w(2^j)
	\|f\|_{L_{\vec{p}}\left(B_j\right)}
	\leq \left(\sum_{j=-\infty}^{\infty}\left|\lambda_{j}\right| ^{\theta'}\right) ^{\frac{1}{\theta ^\prime}}
	\left(\sum_{j=-\infty}^{\infty}w(2^j)^{\theta} 
	\|f\|_{L_{\vec{p}}\left(B_j\right)} ^{\theta}\right) ^{\frac{1}{\theta}}\\
	&\leq \left(\sum_{j=-\infty}^{\infty}\left|\lambda_{j}\right| ^{\theta'}\right) ^{\frac{1}{\theta ^\prime}}
	\left(\sum_{j=-\infty}^{\infty}w(2^j)^{\theta} 
	\|f\|_{L_{\vec{p}}\left(B_j\right)} ^{\theta}\right) ^{\frac{1}{\theta}}\\&
	\lesssim  \left(\sum_{j=-\infty}^{\infty}\left|\lambda_{j}\right| ^{\theta'}\right) ^{\frac{1}{\theta ^\prime}}
	\left(\int ^\infty _0 \left( w(r) 
	\|f\|_{L_{\vec{p}}\left(B(r)\right)} \right) ^{\theta}\right) ^{\frac{1}{\theta}}\\
	& \lesssim \|f\|_{LM_{\vec{p}\theta,w}} \|g\|_{LH_{\vec{p}^\prime \theta^\prime  ,w}}.
\end{aligned}
$$
(2) 
Let L be a bounded linear functional on $ LH_{\vec{p}^\prime \theta^\prime  ,w}(\R^n)$, since the mapping
$$
g \in L^{\vec{p}^{\prime}}\left(\mathbb{R}^{n}\right) \mapsto L\left(g \chi_{B\left(2^{j}\right)}\right) \in \mathbb{C}
$$
is a bounded linear functional, we see that L is realized by an $L^{loc}_{\vec{p}}
(\R^n)$-function
$f$ satisfy
\begin{equation}\label{10}
L(g \chi_{B(2^j)}) = \int _{B(2^j)} g(x)f(x)dx
\end{equation}
for all $ g \in L_{\vec{p}^{\prime}}\left(\mathbb{R}^{n}\right) $ and $ j \in \mathbb{Z} $. We have to check  $f \in L M_{\vec{p} \theta, w }\left(\mathbb{R}^{n}\right) $, or equivalently,
$$\left(
\sum_{j=-\infty}^{\infty}w(2^j)^{\theta}
\|f\|_{L_{\vec{p}}\left(B_j\right)} ^{\theta}\right) ^{\frac{1}{\theta}} < \infty 
$$
To this end, choose a nonnegative $ \ell^{\theta^{\prime}}(\mathbb{Z})$ -sequence $ \left\{\rho_{j}\right\}_{j=-\infty}^{\infty} $ arbitrarily so that $ \rho_{j}=0 $ with  $|j| \gg 1$  and estimate
$$
\sum_{j=-\infty}^{\infty} w(2^j) \rho_{j}
\|f\|_{L_{\vec{p}}\left(B_j\right)}
$$
Let us set
$$
g_{j}(x) :=\left\{\begin{array}{ll}
	 (sgnf) w\left(2^{j}\right) |f(x)|^{p_1-1} \chi_{B\left(2^{j}\right)}(x) (\| f\|_{p_1})^{p_2-p_1} (\|f\|_{p_1,p_2})^{p_3-p_2}\cdot \cdot \cdot(\|f\|_{p_1\cdot \cdot \cdot p_n})^{1-p_n} , & \text { if }\|f\|_{L^{\vec{p}}\left(B\left(2^{j}\right)\right)}>0 \\
	0, & \text { otherwise. }
\end{array}\right.
$$
Then each $ g_{j}$  is a $ \left(\vec{p}^{\prime}, w, R\right) $-block
$$
g := \sum_{j=-\infty}^{\infty} \rho_{j} g_{j}
$$
belongs to  $L H_{\vec{p}^{\prime} \theta^{\prime}, w }\left(\mathbb{R}^{n}\right) $ and satisfies
$$
\int_{\mathbb{R}^{n}}|f(x) g(x)| d x=\sum_{j=-\infty}^{\infty} w\left(2^{j}\right) \rho_{j}\|f\|_{L_{\vec{p}}\left(B_j\right)}
$$
Therefore, by letting $ h(x) := \overline{\operatorname{sgn}(f(x))} g(x) $ for $ x \in \mathbb{R}^{n} $, since  $\operatorname{supp}(h) \subset B\left(2^{j}\right) $ for some large  j,
$
\int_{\mathbb{R}^{n}}|f(x) g(x)| d x=L(h)
$
thanks to \eqref{10} and the fact that $ \rho_{j}=0  $ if $ |j| \gg 1$ . Thus
$$
\begin{aligned}
	\sum_{j=-\infty}^{\infty} w\left(2^{j}\right) \rho_{j}\|f\|_{L_{\vec{p}}\left(B_j\right)}&=\int_{\mathbb{R}^{n}}|f(x) g(x)| d x\\
	&=L(h) \leq\|L\|_{L H_{\vec{p}^{\prime} \theta^{\prime}, w } \rightarrow \mathbb{C}}\| h\|_{LH_{\vec{p}^\prime \theta^\prime, w}}
	\\&\leq\|L\|_{L H_{\vec{p}^{\prime} \theta^{\prime}, w } \rightarrow \mathbb{C}}\left(\sum_{j=-\infty}^{\infty}\left|\rho_{j}\right|^{\theta^{\prime}}\right)^{\frac{1}{\theta^{\prime}}} .
\end{aligned}
$$
\end{proof}

\section{Charcterization of mixed Hardy local  Morrey-type spaces in terms of the grand maxiaml operators and the heat kernel}
Let $ 1<\vec{p}<\infty, 1<\theta \leq \infty$  and $ w \in \Omega_{\vec{p} \theta} $. We characterize the space  $L M_{\vec{p} \theta, w}\left(\mathbb{R}^{n}\right) $ in terms of the heat kernel. Let $ t>0 $ and  $f \in \mathcal{S}^{\prime}\left(\mathbb{R}^{n}\right) $ \cite{DR1975}
$$
e^{t \Delta} f(x) :=\left\langle f, \frac{1}{\sqrt{(4 \pi t)^{n}}} \exp \left(-\frac{|x-\cdot|^{2}}{4 t}\right)\right\rangle \quad x \in \mathbb{R}^{n} .
$$
 \begin{definition}
	(the mixed Hardy local Morrey-type space) The mixed Hardy local Morrey-type space $H L M_{\vec{p} \theta, w}\left(\mathbb{R}^{n}\right)$ collects all $f \in \mathcal{S}^{\prime}\left(\mathbb{R}^{n}\right)$ such that $\sup _{t>0}\left|e^{t \Delta} f\right| \in L M_{\vec{p} \theta, w}\left(\mathbb{R}^{n}\right)$.
	$$
	\|f\|_{H L M_{\vec{p} \theta, w}} :=\left\|\sup _{t>0}\left|e^{t \Delta} f\right|\right\|_{L M_{\vec{p} \theta, w}} < \infty.
	$$
\end{definition}
Let us show that $L M_{\vec{p} \theta, w}\left(\mathbb{R}^{n}\right)$ and $H L M_{\vec{p} \theta, w}\left(\mathbb{R}^{n}\right)$ are isomorphic.
\begin{theorem}
	Let $1<\vec{p}<\infty, 1<\theta \leq \infty$, $w \in \Omega_{\theta}$, define $\hat{v}_{1}, \hat{v}_{2}$ by \eqref{18} and  $H^{*}$ is bounded from $L_{\theta, \hat{v}_{1}}(0, \infty)$ to $L_{\theta, \hat{v}_{2}}(0, \infty)$.
		      \begin{itemize}
		\item[(\romannumeral1)]
 If $f \in L M_{\vec{p} \theta, w}\left(\mathbb{R}^{n}\right)$, then $f \in H L M_{\vec{p} \theta, w}\left(\mathbb{R}^{n}\right)$.
 	\item[(\romannumeral2)]
 If $f \in H L M_{\vec{p} \theta, w}\left(\mathbb{R}^{n}\right)$, then $f$ is represented by a measurable function $g\in L M_{\vec{p} \theta,w }\left(\mathbb{R}^{n}\right)$.
\end{itemize}
	If $f \in L M_{\vec{p} \theta, w}\left(\mathbb{R}^{n}\right)$, then
\begin{equation}\label{11}
	\|f\|_{L M_{\vec{p} \theta, w}} \leq\|f\|_{H L M_{\vec{p} \theta, w}} \leq C\|f\|_{L M_{\vec{p} \theta, w}}
\end{equation}
\end{theorem}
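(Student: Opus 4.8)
The plan is to derive the isomorphism from two pointwise comparisons of the heat maximal function with the Hardy--Littlewood maximal operator $M$, together with the duality of Theorem 4.1. Write $p_t(x)=(4\pi t)^{-n/2}\exp(-|x|^2/(4t))$, so that $e^{t\Delta}f=p_t*f$. Since $p_t$ is radial, radially decreasing and satisfies $\int_{\mathbb{R}^n}p_t=1$, the classical majorant principle gives the pointwise bound $\sup_{t>0}|e^{t\Delta}f(x)|\lesssim Mf(x)$. Under the standing hypothesis that $H^{*}$ is bounded from $L_{\theta,\hat v_1}(0,\infty)$ to $L_{\theta,\hat v_2}(0,\infty)$, Theorem 3.1 (inequality \eqref{2}) yields the boundedness of $M$ on $LM_{\vec p\theta,w}(\mathbb{R}^n)$, so
$$
\|f\|_{HLM_{\vec p\theta,w}}=\Big\|\sup_{t>0}|e^{t\Delta}f|\Big\|_{LM_{\vec p\theta,w}}\lesssim\|Mf\|_{LM_{\vec p\theta,w}}\lesssim\|f\|_{LM_{\vec p\theta,w}}.
$$
This proves (i) and the upper bound in \eqref{11}.

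For the lower bound $\|f\|_{LM_{\vec p\theta,w}}\le\|f\|_{HLM_{\vec p\theta,w}}$, I would use that $f\in LM_{\vec p\theta,w}\subset L^{\mathrm{loc}}_1(\mathbb{R}^n)$ is recovered from its heat extension, namely $e^{t\Delta}f\to f$ almost everywhere as $t\to0^{+}$ (the Lebesgue differentiation theorem applied to the approximate identity $p_t$). Consequently $|f(x)|\le\sup_{t>0}|e^{t\Delta}f(x)|$ for a.e.\ $x$, and monotonicity of the $L_{\vec p}(Q(0,r))$-quasinorm followed by the outer $L_\theta(0,\infty)$-quasinorm gives the estimate at once.

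The substance lies in (ii). Given $f\in HLM_{\vec p\theta,w}$, the inequality $|e^{t\Delta}f|\le\sup_{s>0}|e^{s\Delta}f|$ shows that the family $\{e^{t\Delta}f\}_{t>0}$ is bounded in $LM_{\vec p\theta,w}$ with $\sup_{t>0}\|e^{t\Delta}f\|_{LM_{\vec p\theta,w}}\le\|f\|_{HLM_{\vec p\theta,w}}$. By Theorem 4.1 we have $LM_{\vec p\theta,w}=\big(LH_{\vec p'\theta',w}\big)^{*}$, and since $\vec p'<\infty$ and $\theta'<\infty$ the predual is separable; hence Banach--Alaoglu lets me extract a sequence $t_k\downarrow0$ with $e^{t_k\Delta}f\rightharpoonup g$ in the weak-$*$ topology for some $g\in LM_{\vec p\theta,w}$, and weak-$*$ lower semicontinuity of the norm gives $\|g\|_{LM_{\vec p\theta,w}}\le\liminf_k\|e^{t_k\Delta}f\|_{LM_{\vec p\theta,w}}\le\|f\|_{HLM_{\vec p\theta,w}}$. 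To identify $g$ with $f$, I would test against $\varphi\in C_c^\infty(\mathbb{R}^n)$: on one hand $e^{t\Delta}f=p_t*f\to f$ in $\mathcal S'(\mathbb{R}^n)$, so $\langle e^{t_k\Delta}f,\varphi\rangle\to\langle f,\varphi\rangle$; on the other hand $C_c^\infty(\mathbb{R}^n)\subset LH_{\vec p'\theta',w}$, so weak-$*$ convergence gives $\langle e^{t_k\Delta}f,\varphi\rangle\to\int_{\mathbb{R}^n}g\varphi$. Equating the limits yields $\langle f,\varphi\rangle=\int_{\mathbb{R}^n}g\varphi$ for every $\varphi\in C_c^\infty(\mathbb{R}^n)$, i.e.\ $f=g$ as distributions, so $f$ is represented by $g\in LM_{\vec p\theta,w}$.

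The hard part will be the identification step in (ii): one must check that the test functions used to pin down the weak-$*$ limit belong simultaneously to $C_c^\infty(\mathbb{R}^n)$ and to the predual block space $LH_{\vec p'\theta',w}$, that they form a separating class there, and that the two notions of convergence (weak-$*$ in $(LH_{\vec p'\theta',w})^{*}$ and convergence in $\mathcal S'$) are compatible along the same sequence $t_k$. The remaining ingredients---the majorant bound $\sup_t|e^{t\Delta}f|\lesssim Mf$, the a.e.\ convergence $e^{t\Delta}f\to f$, and the boundedness of $M$ supplied by Theorem 3.1---are standard once the approximate-identity structure of the heat kernel is recorded.
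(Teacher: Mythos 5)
Your proposal is correct and follows essentially the same route as the paper: the upper bound via the pointwise majorization $\sup_{t>0}\left|e^{t\Delta}f\right|\lesssim Mf$ combined with the maximal inequality of Theorem 3.1, the lower bound via Lebesgue differentiation, and part (ii) via the duality of Theorem 4.1, Banach--Alaoglu applied to the bounded family $\{e^{t\Delta}f\}_{t>0}$, and identification of the weak-$*$ limit $g$ with $f$ through the convergence $e^{t\Delta}f\to f$ in $\mathcal{S}'(\mathbb{R}^n)$. If anything, your identification step (testing against $C_{c}^{\infty}(\mathbb{R}^n)\subset LH_{\vec{p}'\theta',w}(\mathbb{R}^n)$ along the same sequence $t_k$) is spelled out more carefully than in the paper, which asserts the conclusion directly.
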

\begin{proof}
 (1) Easily verify that $L M_{\vec{p} \theta, w}\left(\mathbb{R}^{n}\right) \hookrightarrow \mathcal{S}^{\prime}\left(\mathbb{R}^{n}\right)$ in the sense of continuous embedding by  Lemma 6.2. Also\cite{GV2017}
$
\sup _{t>0}\left|e^{t \Delta} f\right| \leq M f .
$
\par 
From \eqref{3}, the $L M_{\vec{p} \theta, w}\left(\mathbb{R}^{n}\right)$-boundedness of the Hardy-Littlewood maximal operator, we see that $f \in H L M_{\vec{p} \theta, w}\left(\mathbb{R}^{n}\right)$ and that the right inequality in \eqref{11} follows.\par
(2) Due to Theorem 4.1, the dual of $L H_{\vec{p}^{\prime} \theta^{\prime},, w }\left(\mathbb{R}^{n}\right)$ is isomorphic to $L M_{\vec{p} \theta, w}\left(\mathbb{R}^{n}\right)$. Let $L: h \in L M_{\vec{p} \theta, w}\left(\mathbb{R}^{n}\right) \mapsto L_{h} \in\left(L H_{\vec{p}^{\prime} \theta^{\prime}, \bar{w} }\left(\mathbb{R}^{n}\right)\right)^{*}$ be an isomorphism in Theorem 4.1. By the Banach-Alaoglu theorem, there exists a positive decreasing sequence $\left\{t_{j}\right\}_{j=1}^{\infty} \subset(0,1)$ such that $L_{e^{t} \Delta_{f}}$ is convergent to $G=L_{g} \in$ $\left(L H_{\vec{p}^{\prime} \theta^{\prime}, \bar{w} }\left(\mathbb{R}^{n}\right)\right)^{*}$ for some $g \in L M_{\vec{p} \theta, w}\left(\mathbb{R}^{n}\right)$ in the weak-* sense. Observe that
$$
\begin{aligned}
\|g\|_{L M_{\vec{p} \theta, w(.)}} &\sim\left\|L_{g}\right\|_{\left(L H_{\vec{p}^{\prime} \theta^{\prime}, \tilde{w}}\right)^{*}}\\
&\leq \liminf _{j \rightarrow \infty}\left\|L_{e^{t} \Delta^{\Delta}}\right\|_{\left(L H_{\vec{p}^{\prime} \theta^{\prime}, \bar{w} }\right)^{*}}\\
&\sim \liminf _{j \rightarrow \infty}\left\|e^{t_{j} \Delta} f\right\|_{L M_{\vec{p} \theta, w}} \leq\|f\|_{H L M_{\vec{p} \theta, w}} .
\end{aligned}
$$
Meanwhile, since $f \in \mathcal{S}^{\prime}\left(\mathbb{R}^{n}\right), e^{t_{j} \Delta} f$ is convergent to $f \in \mathcal{S}^{\prime}\left(\mathbb{R}^{n}\right)$. Thus, we conclude $\mathcal{S}^{\prime}\left(\mathbb{R}^{n}\right) \ni f=g \in L M_{\vec{p} \theta, w}\left(\mathbb{R}^{n}\right)$.
The left inequality in \eqref{11} follows since the spaces $L M_{\vec{p} \theta, w}\left(\mathbb{R}^{n}\right)$ is isomorphic to the dual of $L H_{\vec{p}^{\prime} \theta^{\prime}, \bar{w} }\left(\mathbb{R}^{n}\right)$. Thus, from Lebesgue's differentiation theorem,
$$
\|f\|_{L M_{\vec{p} \theta, w}} \leq\left\|\sup _{t>0}\left|e^{t \Delta} f\right|\right\|_{L M_{\vec{p} \theta, w}}=\|f\|_{H L M_{\vec{p} \theta, w}}.
$$
\end{proof}
In terms of the grand maximal opetator in Definition 2.4,can rephrase Theorem $5.1$ as follows:

\begin{theorem}
 Let $1<\vec{p}<\infty, 1<\theta \leq \infty$, $w \in \Omega_{\theta}$, define $\hat{v}_{1}, \hat{v}_{2}$ by \eqref{18} and $H^{*}$ is bounded from $L_{\theta, \hat{v}_{1}}(0, \infty)$ to $L_{\theta, \hat{v}_{2}}(0, \infty)$.
		     \begin{itemize}
	\item[(\romannumeral1)]
 If $f \in L M_{\vec{p} \theta, w}\left(\mathbb{R}^{n}\right)$, then $\mathcal{M} f \in L M_{\vec{p} \theta, w}\left(\mathbb{R}^{n}\right)$.
	\item[(\romannumeral2)]
Let $f \in \mathcal{S}^{\prime}\left(\mathbb{R}^{n}\right)$, if $\mathcal{M} f \in L M_{\vec{p} \theta, w}\left(\mathbb{R}^{n}\right)$, then $f$ is represented by a measurable function $g\in L M_{\vec{p} \theta, w}\left(\mathbb{R}^{n}\right)$.
\end{itemize}
If $f \in L M_{\vec{p} \theta, w}\left(\mathbb{R}^{n}\right)$, then $C^{-1}\|f\|_{L M_{\vec{p} \theta, w}} \leq\|\mathcal{M} f\|_{L M_{\vec{p} \theta, w}} \leq C\|f\|_{L M_{\vec{p} \theta, w \cdot}} .$
\end{theorem}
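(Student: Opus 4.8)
The plan is to reduce Theorem 5.2 to Theorem 5.1 by sandwiching the grand maximal operator $\mathcal{M}f$ between the heat maximal operator $\sup_{t>0}|e^{t\Delta}f|$ and the Hardy--Littlewood maximal operator $Mf$. Concretely, I would first establish the two pointwise estimates
$$\sup_{t>0}\left|e^{t\Delta}f(x)\right| \lesssim \mathcal{M}f(x) \lesssim Mf(x), \qquad x\in\mathbb{R}^n,$$
valid for $N$ chosen sufficiently large (say $N>n$) in the definition of $\mathcal{M}=\mathcal{M}_N$.

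For the left inequality, the heat kernel is a fixed dilate of one Schwartz function: writing $\Phi(x)=(4\pi)^{-n/2}\exp(-|x|^2/4)$ and $s=\sqrt{t}$, one has $e^{t\Delta}f(x)=s^{-n}\Phi(s^{-1}\cdot)*f(x)$. Since $\Phi/\rho_N(\Phi)\in\mathcal{F}_N$, it follows that $|e^{t\Delta}f(x)|\le\rho_N(\Phi)\,\mathcal{M}f(x)$ for every $t>0$, and taking the supremum over $t$ gives the claim. For the right inequality, every $\varphi\in\mathcal{F}_N$ obeys $|\varphi(y)|\lesssim(1+|y|)^{-N}$, so it admits an integrable radial decreasing majorant once $N>n$; the standard majorant argument then yields $\sup_{t>0,\varphi\in\mathcal{F}_N}|t^{-n}\varphi(t^{-1}\cdot)*f(x)|\lesssim Mf(x)$, that is, $\mathcal{M}f(x)\lesssim Mf(x)$.

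Granting the sandwich, both assertions follow quickly. For (i), if $f\in LM_{\vec{p}\theta,w}(\mathbb{R}^n)$, then $\mathcal{M}f\lesssim Mf$ together with the boundedness of $M$ on $LM_{\vec{p}\theta,w}(\mathbb{R}^n)$ from Theorem 3.1 (estimate \eqref{2}) gives $\mathcal{M}f\in LM_{\vec{p}\theta,w}(\mathbb{R}^n)$ and the right-hand bound $\|\mathcal{M}f\|_{LM_{\vec{p}\theta,w}}\le C\|f\|_{LM_{\vec{p}\theta,w}}$. For (ii), if $\mathcal{M}f\in LM_{\vec{p}\theta,w}(\mathbb{R}^n)$, the left pointwise bound gives $\sup_{t>0}|e^{t\Delta}f|\lesssim\mathcal{M}f\in LM_{\vec{p}\theta,w}(\mathbb{R}^n)$, so $f\in HLM_{\vec{p}\theta,w}(\mathbb{R}^n)$, and Theorem 5.1(ii) represents $f$ by some $g\in LM_{\vec{p}\theta,w}(\mathbb{R}^n)$. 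Finally, chaining the left inequality of \eqref{11} with the pointwise bound yields $\|f\|_{LM_{\vec{p}\theta,w}}\le\|\sup_{t>0}|e^{t\Delta}f|\|_{LM_{\vec{p}\theta,w}}\lesssim\|\mathcal{M}f\|_{LM_{\vec{p}\theta,w}}$, completing the stated norm equivalence.

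The main obstacle is establishing the bound $\mathcal{M}f\lesssim Mf$ rigorously: one must verify that for $N>n$ the family $\mathcal{F}_N$ admits a uniform integrable radial majorant, and justify the convolution estimate for distributions $f$ that are a priori only in $\mathcal{S}'(\mathbb{R}^n)$. This is where the hypothesis $w\in\Omega_\theta$ and the embedding $LM_{\vec{p}\theta,w}(\mathbb{R}^n)\hookrightarrow\mathcal{S}'(\mathbb{R}^n)$, already used in Theorem 5.1, enter to make the heat-kernel representation and the comparison meaningful. Once the two-sided pointwise domination is in place, the rest is a direct transcription of Theorem 5.1.
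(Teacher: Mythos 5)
Your proposal is correct and takes essentially the same route as the paper's own proof, which likewise rests on the two pointwise estimates $\mathcal{M}f(x)\lesssim Mf(x)$ (combined with the $LM_{\vec{p}\theta,w}$-boundedness of $M$) and $\left|e^{t\Delta}f(x)\right|\lesssim\mathcal{M}f(x)$ followed by an appeal to Theorem 5.1. The only difference is that you flesh out details the paper asserts without proof, namely normalizing the heat kernel into $\mathcal{F}_N$ and the uniform integrable radial majorant for $\varphi\in\mathcal{F}_N$ when $N>n$.
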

\begin{proof}
 The implication ($i$) $\Longrightarrow(ii)$ immediately follows from the pointwise inequality $\mathcal{M} f(x) \lesssim  M f(x)$. The converse implication ($ii$) $\Longrightarrow$ ($i$) follows from the pointwise estimatee $\left|e^{t \Delta} f(x)\right| \lesssim \mathcal{M} f(x)$. Indeed, from this pointwise estimate, we conclude $
 \mathop{sup}\limits_{t>0}\left|e^{t \Delta} f\right| \in L M_{\vec{p} \theta, w}\left(\mathbb{R}^{n}\right)$. Thus, we are in the position of applying Theorem $5.1$ to receive $f \in L M_{\vec{p} \theta, w}\left(\mathbb{R}^{n}\right)$.
\end{proof}

\section{Atomic decomposition of local mixed morrey-type spaces}
\begin{theorem}
 Let $ 1<\vec{p}<\infty, 1<\theta \leq \infty $, $ w \in \Omega_{\vec{p} \theta} $ where  $ C^{-1} w(r) \leq w(2 r) \leq C w(r) $ for all  $r>0 $. 
\begin{equation} \label{18}
\hat{v}_{1}(r) := r^{-\sum_{i=1}^{n}\frac{1}{p_i}-1} w(r), \quad \hat{v}_{2}(r)=r^{-\sum_{i=1}^{n}\frac{1}{p_i}} w(r) .
\end{equation}
 $H^{*}$  is bounded from  $L_{\theta, \hat{v}_{1}}(0, \infty) $ to  $L_{\theta, \hat{v}_{2}}(0, \infty) $,   $\vec{s}$ satisfies
\begin{equation}\label{12}
\int_{r}^{\infty} \frac{t^{\frac{1}{n}\sum_{i=1}^{\infty}\frac{1}{s_i} -\frac{1}{n}\sum_{i=1}^{n}\frac{1}{p_i}-1}}{w(r t)} d t \lesssim \frac{r^{\frac{1}{n}\sum_{i=1}^{n}\frac{1}{p_i}-\frac{1}{n}\sum_{i=1}^{n}\frac{1}{s_i}}}{w(r)} ~~\text{for all} ~~ r>0 .
\end{equation}
 And assume that  $ \left\{Q_{j}\right\}_{j=1}^{\infty} \subset \mathcal{Q}\left(\mathbb{R}^{n}\right),\left\{a_{j}\right\}_{j=1}^{\infty} \subset L^{\vec{s}}\left(\mathbb{R}^{n}\right) ,  \left\{\lambda_{j}\right\}_{j=1}^{\infty} \subset[0, \infty)$  satisfying
$$
\left\|a_{j}\right\|_{L^{\vec{s}}} \leq\left\|\chi_{Q_j}\right\|_{L^{\vec{s}}}=\left|Q_{j}\right|^{\frac{1}{n}\sum_{i=1}^{n}\frac{1}{s_i}}, \quad \operatorname{supp}\left(a_{j}\right) \subset Q_{j}, \quad \sum_{j=1}^{\infty} \| \lambda_{j} \chi_{Q_{j}} \|_{L M_{\vec{p} \theta, w}}<\infty.
$$
Then the series  $f := \sum_{j=1}^{\infty} \lambda_{j} a_{j}$  converges in $ L_{\mathrm{loc}}^{1}\left(\mathbb{R}^{n}\right) $ and in the Schwartz space $ \mathcal{S}^{\prime}\left(\mathbb{R}^{n}\right) $ of tempered distributions and satisfies the estimate
\begin{equation}\label{13}
\|f\|_{L M_{\vec{p} \theta, w}} \lesssim \left\|\sum_{j=1}^{\infty} \lambda_{j} \chi Q_{j}\right\|_{L M_{\vec{p} \theta, w}.}
\end{equation}

\end{theorem}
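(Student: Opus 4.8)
The plan is to adapt the Sawano--Tanaka atomic decomposition machinery to the mixed, local, Morrey-type setting, carrying it out in two stages: first the two convergence assertions, then the quantitative bound \eqref{13}. I would first reduce to the case of finitely many atoms, establishing the estimate with a constant independent of the number of terms and then passing to the limit, which is legitimate once convergence is in hand. A preliminary observation, used repeatedly, is that the exponent bookkeeping in \eqref{12} forces $s_i \ge p_i$, so mixed-norm H\"older upgrades the atom-size condition to $\|a_j\|_{L^{\vec{p}}} \le |Q_j|^{\frac1n\sum_{i=1}^{n}\frac1{p_i}} = \|\chi_{Q_j}\|_{L^{\vec{p}}}$; thus each $a_j$ behaves like an $L^{\vec{p}}$-normalized bump adapted to $Q_j$.

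For convergence in $L^{1}_{\mathrm{loc}}$, fix a cube $Q(0,R)$. By mixed-norm H\"older, $\|\lambda_j a_j\|_{L^1(Q(0,R))} \le \lambda_j \|a_j\|_{L^{\vec{s}}}\,\|\chi_{Q_j\cap Q(0,R)}\|_{L^{\vec{s}'}}$, which by the atom condition is dominated by a multiple of $\lambda_j\|\chi_{Q_j}\|_{L^{\vec{p}}}$ on the part of $Q_j$ meeting $Q(0,R)$; summing against the hypothesis $\sum_j \|\lambda_j\chi_{Q_j}\|_{LM_{\vec{p}\theta,w}}<\infty$, after relating the local $L^{\vec{p}}$ quantities to the $LM$ norm using $w\in\Omega_{\vec{p}\theta}$, shows the series converges absolutely in $L^1(Q(0,R))$, hence in $L^1_{\mathrm{loc}}$. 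Convergence in $\mathcal{S}^{\prime}$ then follows similarly, using the rapid decay of Schwartz test functions to sum the contributions of distant cubes in the pairing $\sum_j \lambda_j\langle a_j,\varphi\rangle$.

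The heart of the argument is \eqref{13}. Here I would estimate $\|f\|_{L^{\vec{p}}(Q(0,r))}$ at each scale $r$ and then take the weighted $L_\theta(0,\infty)$ norm. The essential difficulty is to retain the $\ell^1$-over-Morrey grouping $\|\sum_j\lambda_j\chi_{Q_j}\|_{LM_{\vec{p}\theta,w}}$ on the right-hand side, rather than collapsing to the lossy bound $\sum_j\lambda_j\|\chi_{Q_j}\|_{LM_{\vec{p}\theta,w}}$ that a naive triangle inequality would give. To this end I would split each atom's contribution by the position and scale of $Q_j$ relative to $Q(0,r)$: a \emph{local} part, from cubes comparable to or contained in a fixed dilate of $Q(0,r)$, and a \emph{tail} part, from cubes much larger than or far from $Q(0,r)$. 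For the local part the decisive tool is the vector-valued maximal inequality \eqref{3}: one dominates the local sum pointwise, up to the atom normalization, by a powered expression in $M\left[\sum_j\lambda_j\chi_{Q_j}\right]$ (using $\|a_j\|_{L^{\vec{p}}}\le\|\chi_{Q_j}\|_{L^{\vec{p}}}$ inside the cubes and the decay of $M\chi_{Q_j}$ outside), and a powered version of \eqref{3} then converts this into $\|\sum_j\lambda_j\chi_{Q_j}\|_{LM_{\vec{p}\theta,w}}$.

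The tail part is where I expect the main obstacle, and it is precisely what condition \eqref{12} is engineered to control. For $Q_j$ large relative to $Q(0,r)$ one estimates $\|\chi_{Q(0,r)}a_j\|_{L^{\vec{p}}} \le \|a_j\|_{L^{\vec{s}}}\|\chi_{Q(0,r)}\|_{L^{\vec{r}}}$ with $\tfrac1{\vec{p}}=\tfrac1{\vec{s}}+\tfrac1{\vec{r}}$, producing a factor $r^{\frac1n\sum\frac1{p_i}}\,|Q_j|^{\frac1n\sum\frac1{p_i}-\frac1n\sum\frac1{s_i}}$; writing $|Q_j|^{1/n}\sim rt$ and integrating over scales recasts the tail as an integral of exactly the form appearing on the left of \eqref{12}, so that the hypothesis (equivalently the $L_{\theta,\hat{v}_1}\to L_{\theta,\hat{v}_2}$ boundedness of $H^{*}$) yields a bound by $\|\sum_j\lambda_j\chi_{Q_j}\|_{LM_{\vec{p}\theta,w}}$ after taking the $L_\theta(w\,dr)$ norm, the doubling of $w$ being used to pass between the scales $r$ and $rt$. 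The delicate points are matching the exponents in \eqref{12} with the mixed-norm H\"older bookkeeping and applying Minkowski's integral inequality in $L_\theta$ (valid since $\theta>1$) without destroying the grouping; combining the local and tail estimates then gives \eqref{13}.
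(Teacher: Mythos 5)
Your plan diverges from the paper's proof, which is a pure duality argument: Theorem 4.1 identifies $LM_{\vec{p}\theta,w}(\mathbb{R}^n)$ with the dual of the block space $LH_{\vec{p}'\theta',w}(\mathbb{R}^n)$, and the whole of \eqref{13} is obtained by estimating the pairing $\int_{\mathbb{R}^n} f g$ against an arbitrary block decomposition $g=\sum_k \rho_k A_k$: H\"older on each $Q_j$ transfers the maximal operator onto the \emph{blocks}, giving $\int_{Q_j} a_j |A_k| \le \|a_j\|_{L^{\vec{s}}}\|A_k\|_{L^{\vec{s}'}(Q_j)} \lesssim \lambda_j^{-1}\int_{Q_j} \lambda_j\, M[|A_k|^{\vec{s}'}]^{1/\vec{s}'}$, so that $|\int fg| \lesssim \int (\sum_j \lambda_j \chi_{Q_j})\, G$ with $G=\sum_k|\rho_k| M[|A_k|^{\vec{s}'}]^{1/\vec{s}'}$; Lemma 6.1 — where hypothesis \eqref{12} is used, precisely to control $M[|A_k|^{\vec{s}'}]^{1/\vec{s}'}$ outside $B(2^{k+1})$ — shows $G$ lies in the block space with norm $\lesssim 1$, and one more application of the duality closes the estimate. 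Your tail estimate via \eqref{12} and the $H^*$-boundedness is the direct-method analogue of that far-field part of Lemma 6.1 and is sound in spirit, as are your convergence arguments.

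The genuine gap is in your local part. You claim to dominate the local atomic sum \emph{pointwise} by a powered expression in $M\bigl[\sum_j \lambda_j \chi_{Q_j}\bigr]$, "using $\|a_j\|_{L^{\vec{p}}}\le\|\chi_{Q_j}\|_{L^{\vec{p}}}$ inside the cubes". Norm normalization gives no pointwise control: an $L^{\vec{s}}$-normalized atom may concentrate on an arbitrarily small subset of $Q_j$ with arbitrarily large supremum, so $\sum_j\lambda_j|a_j|$ is not pointwise dominated by any expression built from $M\chi_{Q_j}$, and the vector-valued inequality \eqref{3} cannot be applied with $a_j$ in place of $\chi_{Q_j}$. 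The only mechanism that retains the $\ell^1$-over-Morrey grouping for $L^{\vec{s}}$-normalized atoms is to dualize, at least locally in $L^{\vec{p}}(Q(0,r))$: pair with $g$, use H\"older on each $Q_j$ to get $\int_{Q_j}|a_j g| \lesssim \int_{Q_j} M[|g|^{\vec{s}'}]^{1/\vec{s}'}$, and then use boundedness of $M$ in the dual exponents — which requires $s_i>p_i$ \emph{componentwise} and reconstructs exactly the paper's argument, the block predual merely packaging the scales $r\sim 2^k$. On that point, your "preliminary observation" that \eqref{12} forces $s_i \ge p_i$ is unfounded: \eqref{12} constrains only the sums $\sum_i 1/s_i$ and $\sum_i 1/p_i$, not the individual components; the componentwise condition $s_i\in(p_i,\infty)$ is a separate hypothesis (it is stated explicitly in the paper's Lemma 6.1), and both your H\"older upgrade of the atom size condition and the repaired local argument depend on it. As written, the local part does not close without this duality step.
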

\begin{lemma}
Let $1<\vec{p}<\infty, 1<\theta \leq \infty$, $w \in \Omega_{\theta}$, each $A_{j}$ be a $\left(\vec{p} ', w, 2^{j}\right)$-block and $\left\{\rho_{j}\right\}_{j=-\infty}^{\infty} \in \ell^{\theta^{\prime}}$. Suppose $s_i \in(p_i, \infty)$ and $\vec{s}$ satisfies \eqref{12} for all $r>0$. Then
$$
h := \sum_{j=-\infty}^{\infty} \rho_{j}\left(M\left[\left|A_{j}\right|^{\vec{s}^{\prime}}\right]\right)^{\frac{1}{\vec{s}}} \in L H_{\vec{p}^{\prime} \theta^{\prime}, w }\left(\mathbb{R}^{n}\right)
,~~~~
\|h\|_{L H_{\vec{p}^{\prime} \theta^{\prime}, w }} \leq C\left(\sum_{j=-\infty}^{\infty}\left|\rho_{j}\right|^{\theta^{\prime}}\right)^{1 / \theta^{\prime}}.
$$
\end{lemma}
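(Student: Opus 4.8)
The plan is to prove the statement by reducing it to a per-block computation: I will show that each summand $B_j := \left(M\left[|A_j|^{\vec{s}'}\right]\right)^{1/\vec{s}}$ is itself an $\ell^{\theta'}$-controllable superposition of $(\vec{p}',w,\cdot)$-blocks spread over the dilated scales $2^{j+k}$, $k\ge 0$, with coefficients that decay geometrically in $k$. Once this is in hand, the whole family of blocks produced as $j$ and $k$ range over their indices can be reindexed as a single admissible expansion of $h$, and the required estimate on $\|h\|_{LH_{\vec{p}'\theta',w}}$ will drop out of the definition of the block-space norm together with the geometric decay. Throughout $B_j\ge 0$, so there is no cancellation to worry about, and the convergence of $\sum_j\rho_j B_j$ in the appropriate sense will follow a posteriori from the norm bound.

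First I would fix $j$ and decompose $B_j$ according to the dyadic annuli centered at the support cube of $A_j$: write $B_j=\sum_{k\ge 0}B_j\chi_{E_{j,k}}$, where $E_{j,0}:=Q(2^j)$ and $E_{j,k}:=Q(2^{j+k})\setminus Q(2^{j+k-1})$ for $k\ge 1$. On the core cube $E_{j,0}$, since $s_i>p_i$ leaves room in the exponents, the mixed-norm maximal inequality (Lemma 3.1) applied to the power $|A_j|^{\vec{s}'}$ together with $\|A_j\|_{L^{\vec{p}'}}\le w(2^j)$ gives $\|B_j\chi_{E_{j,0}}\|_{L^{\vec{p}'}}\lesssim w(2^j)$. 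On the $k$-th annulus with $k\ge 1$, I would use that $A_j$ is supported in $Q(2^j)$: the maximal function $M[|A_j|^{\vec{s}'}]$ is then controlled off the support by the average of $|A_j|^{\vec{s}'}$ over the much larger cube $Q(2^{j+k})$, and a mixed-norm H\"older estimate (exploiting $s_i>p_i$) converts this into the off-diagonal bound $\|B_j\chi_{E_{j,k}}\|_{L^{\vec{p}'}}\lesssim 2^{-k\delta}\,w(2^j)$ for a fixed $\delta>0$. Normalizing, each nonzero piece is written as $B_j\chi_{E_{j,k}}=c_{j,k}A_{j,k}$, where $A_{j,k}$ is a genuine $(\vec{p}',w,2^{j+k})$-block and $c_{j,k}:=\|B_j\chi_{E_{j,k}}\|_{L^{\vec{p}'}}/w(2^{j+k})\lesssim 2^{-k\delta}\,w(2^j)/w(2^{j+k})$.

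The decisive step, and the one I expect to be the main obstacle, is to prove that the coefficient tail is uniformly summable, $\sum_{k\ge 0}c_{j,k}\lesssim 1$ independently of $j$. The geometric gain $2^{-k\delta}$ from H\"older must survive multiplication by the weight ratio $w(2^j)/w(2^{j+k})$, which can itself grow in $k$; this is exactly what hypothesis \eqref{12} is designed to control. Reading the integral in \eqref{12} as the continuous form of the dyadic sum $\sum_{k\ge 0}2^{-k\delta}\,w(2^j)/w(2^{j+k})$ (with the doubling of $w$ reconciling the discrete scales $2^{j+k}$ with the continuous weight $w(r)$), \eqref{12} yields precisely $\sum_{k\ge 0}c_{j,k}\lesssim 1$. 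Verifying this conversion, and carrying the mixed-norm power operations cleanly through Lemma 3.1 and the H\"older step, is the technical heart of the argument.

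Finally I would assemble the pieces. The collection $\{A_{j,k}\}_{j\in\mathbb{Z},\,k\ge 0}$ is a countable family of $(\vec{p}',w,\cdot)$-blocks, and $h=\sum_{j,k}\rho_j c_{j,k}A_{j,k}$ is an admissible block expansion of $h$. Since $1<\theta\le\infty$ forces $\theta'\ge 1$, the $\ell^1$-bound on the $k$-tail dominates the $\ell^{\theta'}$-bound, so
$$
\sum_{j,k}\left|\rho_j c_{j,k}\right|^{\theta'}=\sum_j|\rho_j|^{\theta'}\sum_k c_{j,k}^{\theta'}\le\sum_j|\rho_j|^{\theta'}\Big(\sum_k c_{j,k}\Big)^{\theta'}\lesssim\sum_j|\rho_j|^{\theta'}.
$$
Taking the infimum over admissible expansions in the definition of $\|\cdot\|_{LH_{\vec{p}'\theta',w}}$ gives $h\in LH_{\vec{p}'\theta',w}(\R^n)$ with $\|h\|_{LH_{\vec{p}'\theta',w}}\le C\big(\sum_j|\rho_j|^{\theta'}\big)^{1/\theta'}$, which is the claim.
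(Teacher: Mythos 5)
Your proposal is correct and follows essentially the same route as the paper: the paper likewise splits each summand into the near part $\chi_{B(2^{j+1})}\bigl(M[|A_j|^{\vec{s}'}]\bigr)^{1/\vec{s}}$, handled by the mixed-norm maximal inequality (possible exactly because $s_i>p_i$), and a far part decomposed over the annuli $B(2^{j+k+2})\setminus B(2^{j+k+1})$, where the off-support decay of $M$ plus H\"older gives the factor $2^{j(\sum_i 1/p_i-\sum_i 1/s_i)}w(2^j)/2^{(j+k)\sum_i 1/s_i'}$ and condition \eqref{12} supplies the uniform summability in $k$. Your write-up merely merges the two parts into one annular family $\{E_{j,k}\}$ and makes explicit the block normalization and the $\ell^1\hookrightarrow\ell^{\theta'}$ coefficient bookkeeping that the paper leaves implicit in its final display.
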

\begin{proof}
By the $L^{\vec{s}^{\prime}}\left(\mathbb{R}^{n}\right)$-boundedness of the Hardy-Littlewood maximal operator and $\theta^{\prime}<\infty$, 
$$
\sum_{j=-\infty}^{\infty} \rho_{j} \chi_{B(2 j+1)}\left(M\left[\left|A_{j}\right|^{\vec{s}^{\prime}}\right]\right)^{\frac{1}{\vec{s}}} \in L H_{\vec{p}^{\prime} \theta^{\prime}, w }\left(\mathbb{R}^{n}\right)
$$
and
$$
\left\|\sum_{j=-\infty}^{\infty} \rho_{j} \chi_{B\left(2^{j+1}\right)}\left(M\left[\left|A_{j}\right|^{\vec{s}^{\prime}}\right]\right)^{\frac{1}{\vec{s}}} \right\|_{L H_{\vec{p}^{\prime} \theta^{\prime}, \bar{w} }} \lesssim \left(\sum_{j=-\infty}^{\infty}\left|\rho_{j}\right|^{\theta^{\prime}}\right)^{1 / \theta^{\prime}}.
$$
Meanwhile, combining $\left\|A_{j}\right\|_{L^{\vec{s}^{\prime}}} \leq\left|B\left(2^{j}\right)\right|^{\frac{1}{n}\sum_{i=1}^{n}\frac{1}{p_i}-\frac{1}{n}\sum_{i=1}^{n}\frac{1}{s_i}}\left\|A_{j}\right\|_{L_{\vec{p}^{\prime}}} \lesssim 2^{\sum_{i=1}^{n}\frac{1}{p_i}-\sum_{i=1}^{n}\frac{1}{s_i}} w\left(2^{j}\right)$ and \eqref{12}, therefore,
$$
\begin{aligned}
&\sum_{j=-\infty}^{\infty} \rho_{j} \chi_{B\left(2^{j+1}\right)^{c}}\left(M\left[\left|A_{j}\right|^{\vec{s}^{\prime}}\right]\right)^{\frac{1}{\vec{s}}}\\
&=\sum_{k=0}^{\infty} \sum_{j=-\infty}^{\infty} \rho_{j} \chi_{B\left(2^{j+k+2}\right) \backslash B\left(2^{j+k+1}\right)}\left(M\left[\left|A_{j}\right|^{\vec{s}^{\prime}}\right]\right)^{\frac{1}{\vec{s}}} \\
& \leq C \sum_{k=0}^{\infty} \sum_{j=-\infty}^{\infty} \rho_{j} \frac{1}{2^{(j+k)\sum_{i=1}^{n}\frac{1}{s_i^{\prime}}}}\left\| A_j  \right\|_{L_{\vec{s}}B((2^j))}
\chi_{B\left(2^{j+k+2}\right) \backslash B\left(2^{j+k+1}\right)} \\
&\leq C \sum_{k=0}^{\infty} \sum_{j=-\infty}^{\infty} \rho_{j} \frac{2^{j \sum_{i=1}^{n}\frac{1}{p_i}-j \sum_{i=1}^{n}\frac{1}{s_i}}}{2^{(j+k)\sum_{i=1}^{n}\frac{1}{s_i^{\prime}}}}\left\| A_j  \right\|_{L_{\vec{p}'}B((2^j))} \chi_{B\left(2^{j+k+2}\right) \backslash B\left(2^{j+k+1}\right)^{.}} \\
&\leq C \sum_{k=0}^{\infty} \sum_{j=-\infty}^{\infty} \rho_{j} \frac{2^{j \sum_{i=1}^{n}\frac{1}{p_i}-j \sum_{i=1}^{n}\frac{1}{s_i}} w\left(2^{j}\right)}{2^{(j+k)\sum_{i=1}^{n}\frac{1}{s_i^{\prime}}}} \chi_{B\left(2^{j+k+2}\right) \backslash B\left(2^{j+k+1}\right)} 
\lesssim \sum_{j=-\infty}^{\infty} \rho_{j}.
\end{aligned}
$$

\end{proof}
Next, to prove Theorem 6.1.
\begin{proof}
To prove \eqref{13}, we resort to the duality obtained in Theorem 4.1
$$
\|f\|_{L M_{\vec{p} \theta, w}} = \sup \left\{\left|\int_{\mathbb{R}^{n}} f(x) g(x) d x\right|:\|g\|_{L H_{\vec{p}^{\prime} \theta^{\prime}, w }}=1\right\} .
$$
We can assume that $\left\{\lambda_{j}\right\}_{j=1}^{\infty}$ is finitely supported thanks to the monotone convergence theorem. Let us assume in addition that the $a_{j}$ are non-negative without loss of generality. 
$$
g := \sum_{k=-\infty}^{\infty} \rho_{k} A_{k}, \quad G := \sum_{k=-\infty}^{\infty}\left|\rho_{k}\right| M\left[\left|A_{k}\right|^{\vec{s}^{\prime}}\right]^{\frac{1}{\vec{s}^{\prime}}},
$$
where each $A_{k}$ is a $\left(\vec{p}^{\prime}, w, 2^{k}\right)$-block, Lemma 6.1 and
$$
\sum_{k=-\infty}^{\infty}\left|\rho_{k}\right|^{\theta^{\prime}} \leq 1.
$$
Then 
$$
\begin{aligned}
\left|\int_{\mathbb{R}^{n}} f(x) g(x) d x\right| & \leq \sum_{(j, k) \in \mathbb{N} \times \mathbb{Z}} \lambda_{j}\left|\rho_{k}\right| \int_{B\left(2^{k}\right) \cap Q_{j}} a_{j}(x)\left|A_{k}(x)\right| d x \\
& \leq \sum_{(j, k) \in \mathbb{N} \times \mathbb{Z}} \lambda_{j}\left|\rho_{k}\right| \cdot\left\|a_{j}\right\|_{L^{\vec{s}}\left(Q_{j}\right)}\left\|A_{k}\right\|_{L^{\vec{s}^{\prime}}\left(Q_{j}\right)} \\
& \lesssim \sum_{(j, k) \in \mathbb{N} \times \mathbb{Z}} \lambda_{j}\left|\rho_{k}\right| \cdot \int_{Q_{j}} M\left[A_{k} {\vec{s}^{\prime}}\right](x)^{\frac{1}{\vec{s}^{\prime}}} d x \\
&< \infty
\end{aligned}
$$ 
\end{proof}
\begin{lemma}
Let $ 1<\vec{p}<\infty, 0<\theta \leq \infty, w \in \Omega_{\theta} $, define $ \hat{v}_{1}, \hat{v}_{2} $ by \eqref{18} and $  H^{*} $ is bounded from $ L_{\theta, \hat{v}_{1}}(0, \infty)$  to $ L_{\theta, \hat{v}_{2}}(0, \infty) $. Then  $LM_{ \vec{p} \theta, w}\left(\mathbb{R}^{n}\right) \hookrightarrow   \mathcal{S}^{\prime}\left(\mathbb{R}^{n}\right)$  in the sense of continuous embedding.
\end{lemma}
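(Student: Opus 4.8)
The plan is to realise each $f\in LM_{\vec p\theta,w}(\mathbb{R}^n)$ as the tempered distribution $L_f:\varphi\mapsto\int_{\mathbb{R}^n}f(x)\varphi(x)\,dx$ and to establish the single estimate $|\langle f,\varphi\rangle|\lesssim\|f\|_{LM_{\vec p\theta,w}}\,\rho_N(\varphi)$ for one fixed, sufficiently large $N$. This inequality at once shows that the defining integral converges absolutely, that $L_f\in\mathcal S'(\mathbb{R}^n)$, and that the map $f\mapsto L_f$ is continuous; its injectivity is automatic, since $\int f\varphi=0$ for all $\varphi$ forces $f=0$ almost everywhere. Because $\rho_N(\varphi)\ge(1+|x|)^N|\varphi(x)|$ pointwise, the whole problem reduces to the weighted $L^1$ bound $\int_{\mathbb{R}^n}(1+|x|)^{-N}|f(x)|\,dx\lesssim\|f\|_{LM_{\vec p\theta,w}}$.

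To prove this bound I would foliate $\mathbb{R}^n$ by the balls $B(s)$. The layer-cake identity $(1+|x|)^{-N}=N\int_{|x|}^\infty(1+s)^{-N-1}\,ds$ together with Fubini's theorem turns the integral into $N\int_0^\infty(1+s)^{-N-1}\|f\|_{L^1(B(s))}\,ds$, and the mixed Hölder inequality gives $\|f\|_{L^1(B(s))}\le\|f\|_{L^{\vec p}(B(s))}\|\chi_{B(s)}\|_{L^{\vec p'}}\lesssim s^{\sum_{i=1}^{n}\frac{1}{p_i'}}\,F(s)$, where $F(s):=\|f\|_{L^{\vec p}(B(s))}$ is nondecreasing in $s$. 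Thus it suffices to dominate $\int_0^\infty(1+s)^{-N-1}s^{\sum_{i=1}^{n}\frac{1}{p_i'}}F(s)\,ds$ by $C\|f\|_{LM_{\vec p\theta,w}}$. Since $w\in\Omega_\theta$ is not equivalent to $0$, I can fix $t_0>0$ with $0<\|w\|_{L_\theta(t_0,\infty)}<\infty$ and split the integral at $s=t_0$.

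For the near part $\int_0^{t_0}$, monotonicity of $F$ gives $F(s)\le F(t_0)$, and the elementary weighted inequality $F(t_0)\,\|w\|_{L_\theta(t_0,\infty)}\le\|w(\cdot)F(\cdot)\|_{L_\theta(t_0,\infty)}\le\|f\|_{LM_{\vec p\theta,w}}$ controls $F(t_0)$ by $\|f\|_{LM_{\vec p\theta,w}}$; the remaining factor $\int_0^{t_0}(1+s)^{-N-1}s^{\sum_{i=1}^{n}\frac{1}{p_i'}}\,ds$ is finite because the exponent is positive. For the far part $\int_{t_0}^\infty$, applying Hölder's inequality in $s$ with exponents $(\theta',\theta)$ separates it as $\big\|(1+s)^{-N-1}s^{\sum_{i=1}^{n}\frac{1}{p_i'}}/w(s)\big\|_{L_{\theta'}(t_0,\infty)}\cdot\|wF\|_{L_\theta(t_0,\infty)}$, whose second factor is again at most $\|f\|_{LM_{\vec p\theta,w}}$.

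Everything therefore rests on the finiteness of the dual factor $\big\|(1+s)^{-N-1}s^{\sum_{i=1}^{n}\frac{1}{p_i'}}/w(s)\big\|_{L_{\theta'}(t_0,\infty)}$, and this is the step I expect to be the genuine obstacle, because $1/w(s)$ may grow or even become undefined as $s\to\infty$ under $\Omega_\theta$ alone (for instance if $w$ vanishes on a half-line, in which case $f$ need not be tempered at all). This is exactly where the hypothesis that $H^{*}$ is bounded from $L_{\theta,\hat v_1}(0,\infty)$ to $L_{\theta,\hat v_2}(0,\infty)$ is indispensable: through the Muckenhoupt-type characterisation of the weighted Hardy inequality for $H^{*}$, this boundedness forces $\int_r^\infty\hat v_1(t)^{-\theta'}\,dt<\infty$ for each $r>0$, that is, it makes $t^{\sum_{i=1}^{n}\frac{1}{p_i}+1}/w(t)$ integrable in the $\theta'$-sense at infinity and rules out the degeneracy of $w$. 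Choosing $N$ so large that the surplus decay $(1+s)^{-N-1}$ dominates the discrepancy between the power $s^{\sum_{i=1}^{n}\frac{1}{p_i'}}$ appearing here and the power $t^{\sum_{i=1}^{n}\frac{1}{p_i}+1}$ supplied by the Hardy condition, I would bound the dual factor by the finite constant thus obtained. Combining the near and far estimates yields $\int_{\mathbb{R}^n}(1+|x|)^{-N}|f(x)|\,dx\lesssim\|f\|_{LM_{\vec p\theta,w}}$ and hence the continuous embedding; I note that an equivalent discrete bookkeeping over dyadic shells, passing through $\{w(2^j)F(2^j)\}_j\in\ell^\theta$ and Hölder on $\mathbb Z$, leads to the same convergence condition governed by $H^{*}$.
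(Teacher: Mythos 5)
Your proposal is correct in substance, but it takes a genuinely different route from the paper. The paper's proof is soft: it invokes the boundedness of the Hardy--Littlewood maximal operator $M$ on $LM_{\vec{p}\theta,w}(\mathbb{R}^n)$ (cited from \cite{SJ2022}, and itself a consequence of the $H^{*}$ hypothesis), observes that for $x\in B(1)$ one has $Mf(x)\gtrsim |B(R)|^{-1}\int_{B(R)}|f(y)|\,dy$ for every $R\ge 1$, so that with $\alpha:=\|\chi_{B(1)}\|_{LM_{\vec{p}\theta,w}}$ one gets the growth bound $\int_{B(R)}|f(y)|\,dy\lesssim R^{n}\|f\|_{LM_{\vec{p}\theta,w}}$, and then sums over the annuli $B(j+1)\setminus B(j)$ against the Schwartz decay $(1+|x|)^{-(2n+1)}$. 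You instead work directly with the weight: layer-cake plus Fubini, mixed H\"{o}lder to replace $\|f\|_{L^1(B(s))}$ by $s^{\sum_{i=1}^{n}\frac{1}{p_i'}}F(s)$, monotonicity of $F$ for the near part, and H\"{o}lder with exponents $(\theta',\theta)$ in the radial variable together with the Muckenhoupt-type necessary condition $\int_r^\infty \hat{v}_1(t)^{-\theta'}\,dt<\infty$ extracted from the $H^{*}$ boundedness for the far part. Both routes culminate in the same estimate $\int_{\mathbb{R}^n}(1+|x|)^{-N}|f(x)|\,dx\lesssim\|f\|_{LM_{\vec{p}\theta,w}}$; yours is more self-contained and makes explicit exactly where and why the $H^{*}$ hypothesis is needed --- you correctly identify that $\Omega_\theta$ alone does not suffice (e.g.\ if $w$ vanishes on a half-line the space contains non-tempered functions), a point the paper's argument hides inside the cited maximal theorem.

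Two caveats, neither fatal. First, your H\"{o}lder step in $s$ requires $\theta\ge 1$, whereas the lemma is stated for $0<\theta\le\infty$; the paper's maximal-operator route does not visibly use this restriction (though the stated range is likely a typo, since the surrounding results all assume $1<\theta\le\infty$). Second, the Muckenhoupt condition for $H^{*}$ yields $\int_r^\infty \hat{v}_1^{-\theta'}<\infty$ only for those $r$ with $\|\hat{v}_2\|_{L_\theta(0,r)}>0$, not literally ``for each $r>0$''; and the existence of $t_0$ with $0<\|w\|_{L_\theta(t_0,\infty)}<\infty$ likewise needs the a.e.\ positivity of $w$ at infinity furnished by this same condition, not membership in $\Omega_\theta$ alone. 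Since $w\not\equiv 0$ guarantees $\|\hat{v}_2\|_{L_\theta(0,r_0)}>0$ for some $r_0$, you may simply choose $t_0\ge r_0$, and both defects are repaired without changing the structure of your argument.
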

\begin{proof}
Denote by $\mathcal{B}_{x}$ the set of all open balls in $\mathbb{R}^{n}$ which contain $x$. 
The Hardy-Littlewood maximal operator $M$ is bounded from $L M_{\vec{p} \theta, w }\left(\mathbb{R}^{n}\right)$ from  \cite{SJ2022},  $\alpha :=\left\|\chi_{B(1)}\right\|_{L M_{\vec{p} \theta, w}}$. Therefore,
$$
\frac{\alpha}{|B(R)|} \int_{B(R)}|f(y)| d y \leq\left\|\chi_{B(1)} M f\right\|_{L M_{\vec{p} \theta, w}} \leq\|M f\|_{L M_{\vec{p} \theta, w }} \leq C\|f\|_{L M_{\vec{p} \theta, w}},
$$
 For all $\kappa \in \mathcal{S} (\R^n)$ and $f \in L M_{\vec{p} \theta, w} $. Then
$$
\begin{aligned}
\int_{\mathbb{R}^{n}}|\kappa(x) f(x)| d x &=\int_{B(1)}|\kappa(x) f(x)| d x+\sum_{j=1}^{\infty} \int_{B(j+1) \backslash B(j)}|\kappa(x) f(x)| d x \\
& \leq\|\kappa\|_{L^{\infty}(B(1))}\|f\|_{L^{1}(B(1))}+\sum_{j=1}^{\infty} \int_{B(j+1) \backslash B(j)} \frac{|x|^{2 n+1}}{j^{2 n+1}}|\kappa(x) f(x)| d x \\
& \lesssim \|f\|_{L M_{\vec{p} \theta, w}}\left(\sup _{x \in \mathbb{R}^{n}}(1+|x|)^{2 n+1}|\kappa(x)|\right).
\end{aligned}
$$
\end{proof}
With the aid of Lemma 6.2, we extend into Theorem 6.2.\\
\begin{theorem}
Satisfying the conditions of theorem 6.1 but where $ \left\{a_{j}\right\}_{j=1}^{\infty} \subset L^{\infty}\left(\mathbb{R}^{n}\right) $ such that $ f := \sum_{j=1}^{\infty} \lambda_{j} a_{j}$  converges in $ \mathcal{S}^{\prime}\left(\mathbb{R}^{n}\right) \cap L^{\mathrm{loc}}_{1}\left(\mathbb{R}^{n}\right) $, that
\begin{equation}\label{78}
\left|a_{j}\right| \leq \chi_{Q_{j}}, \quad \int_{\mathbb{R}^{n}} x^{\alpha} a_{j}(x) d x=0,
\end{equation}
for all multi-indices $ \alpha=\left(\alpha_{1}, \alpha_{2}, \ldots, \alpha_{n}\right) $ with  $|\alpha| := \alpha_{1}+\alpha_{2}+\cdots+\alpha_{n} < \infty $ and, that for all  $v>0$ 
\begin{equation}\label{79}
\left\|\left(\sum_{j=1}^{\infty}\left(\lambda_{j} \chi_{Q_{j}}\right)^{v}\right)^{1 / v}\right\|_{L M_{\vec{p} \theta, w}} \leq C_{v}\|f\|_{L M_{\vec{p} \theta, w}} .
\end{equation}
Here the constant  $C_{v}>0$  is independent of  f .
\end{theorem}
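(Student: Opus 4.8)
The plan is to deduce \eqref{79} from a single pointwise domination of the coefficient sequence by the grand maximal function, after which the grand maximal characterization of Theorem 5.2 closes the argument. Concretely, I would first record that the decomposition $f=\sum_j \lambda_j a_j$ at hand is the one furnished by Lemma 2.2 underlying the nonsmooth decomposition of Theorem 6.2: the cubes $Q_j$ arise as Whitney cubes of the level sets $\mathcal{O}_\nu=\{y\in\mathbb{R}^n:\mathcal{M}f(y)>2^\nu\}$, each $Q_j$ carries a level $\nu(j)\in\mathbb{Z}$ with $Q_j\subset\mathcal{O}_{\nu(j)}$ and $\lambda_j\lesssim 2^{\nu(j)}$, and for each fixed level the cubes enjoy the bounded intersection property of Lemma 2.2(ii). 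These three facts — containment in the level set, the size $\lambda_j\lesssim 2^{\nu(j)}$, and bounded overlap at each level — are all that the reverse estimate needs; the normalization $|a_j|\le\chi_{Q_j}$ and the vanishing moments \eqref{78} enter only in guaranteeing that this is a legitimate atomic decomposition converging in $\mathcal{S}'\cap L^{1}_{\mathrm{loc}}$, so that $\mathcal{M}f$ governs $f$ through Theorem 5.2.

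The heart of the argument is the pointwise estimate
$$
\left(\sum_j (\lambda_j\chi_{Q_j}(x))^v\right)^{1/v}\lesssim C_v^{1/v}\,\mathcal{M}f(x),\qquad x\in\mathbb{R}^n.
$$
To prove it, I would fix $x$ and sum first over cubes sharing a common level $\nu$. Since $\lambda_j\lesssim 2^{\nu(j)}$ and, by the bounded intersection property, at most $O(1)$ of the cubes of level $\nu$ contain $x$, one gets $\sum_{\nu(j)=\nu}(\lambda_j\chi_{Q_j}(x))^v\lesssim 2^{\nu v}$. A cube $Q_j$ of level $\nu$ can contain $x$ only if $x\in\mathcal{O}_\nu$, that is $2^\nu<\mathcal{M}f(x)$; hence only the levels with $2^\nu<\mathcal{M}f(x)$ contribute, and the geometric series
$$
\sum_{\nu:\,2^\nu<\mathcal{M}f(x)}2^{\nu v}\;\lesssim\;\frac{1}{1-2^{-v}}\,(\mathcal{M}f(x))^v
$$
yields the claim with $C_v=(1-2^{-v})^{-1}$, which is finite for every $v>0$ and explains the $v$-dependence of the constant in \eqref{79}.

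Taking $LM_{\vec{p}\theta,w}$-quasinorms in the pointwise estimate and applying Theorem 5.2, namely $\|\mathcal{M}f\|_{LM_{\vec{p}\theta,w}}\le C\|f\|_{LM_{\vec{p}\theta,w}}$, gives
$$
\left\|\left(\sum_j(\lambda_j\chi_{Q_j})^v\right)^{1/v}\right\|_{LM_{\vec{p}\theta,w}}\lesssim C_v^{1/v}\,\|\mathcal{M}f\|_{LM_{\vec{p}\theta,w}}\lesssim C_v^{1/v}\,\|f\|_{LM_{\vec{p}\theta,w}},
$$
which is \eqref{79}. The main obstacle, and the step I would treat most carefully, is the bookkeeping that connects the abstract hypotheses of the theorem to the explicit Whitney/level-set structure: one must verify that $\lambda_j\lesssim 2^{\nu(j)}$ and $Q_j\subset\mathcal{O}_{\nu(j)}$ genuinely hold for the decomposition in question (this is where Lemma 2.2(ii),(iv) and the reindexing from the double index $(j,k)$ to the single index $j$ must be made precise), and that the bounded intersection property survives this reindexing. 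Once the geometric structure is pinned down, the remaining work is a routine geometric summation followed by a single invocation of Theorem 5.2.
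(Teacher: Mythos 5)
Your proposal is correct and takes essentially the same route as the paper: both work with the decomposition produced by the Calder\'on--Zygmund/Whitney construction of Lemma 2.2 ($\kappa_{j,k}=C_02^j$, $Q_{j,k}\subset\mathcal{O}_j$), use the bounded intersection property together with the nesting $\mathcal{O}_{j+1}\subset\mathcal{O}_j$ and the fact that $x\in\mathcal{O}_\nu$ forces $2^\nu<\mathcal{M}f(x)$ to reduce $\bigl(\sum_j(\lambda_j\chi_{Q_j})^v\bigr)^{1/v}$ to a geometric sum dominated by $\mathcal{M}f$, and then close with $\|\mathcal{M}f\|_{LM_{\vec{p}\theta,w}}\lesssim\|f\|_{LM_{\vec{p}\theta,w}}$ from Theorem 5.2. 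The only difference is presentational: you carry out the geometric summation pointwise before taking the quasi-norm, whereas the paper performs the equivalent telescoping $\sum_j 2^j\chi_{\mathcal{O}_j\setminus\mathcal{O}_{j+1}}\le\mathcal{M}f$ inside the norm.
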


\begin{lemma}\cite{BS2014}
 Let $\varphi \in \mathcal{S}\left(\mathbb{R}^{n}\right)$. With the same notation as Lemma $2.2$, then
\begin{equation}\label{30}
\left|\left\langle b_{j}, \varphi\right\rangle\right| \leq C_{\varphi}\left\{\sum_{l=0}^{\infty}\left(\frac{1}{2^{l n}}\left\|\mathcal{M} f \cdot \chi \mathcal{O}_{j}\right\|_{L^{1}\left(B\left(2^{l}\right)\right)}\right)^{\mu}\right\}^{1 / \mu},
\end{equation}
where $\mu := \frac{n+d+1}{n}$ and the constant $C_{\varphi}$ in $\eqref{30}$ depends on $\varphi$ but not on $j$ or $k$.
\end{lemma}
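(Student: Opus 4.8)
The plan is to bypass the moment structure of $b_j$ entirely for this particular estimate and instead reduce everything to a single pointwise domination of $b_j$ by the grand maximal function $\mathcal{M}f$; after that, the $\ell^{\mu}$-shape of the right-hand side of \eqref{30} will come out of the Schwartz decay of $\varphi$ together with one application of H\"older's inequality.

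First I would establish the pointwise bound
$$
|b_j(x)|\lesssim \mathcal{M}f(x)\,\chi_{\mathcal{O}_j}(x)\qquad(x\in\mathbb{R}^n).
$$
Since each $b_{j,k}$ is supported in $Q_{j,k}$ and satisfies $|b_{j,k}|\le \mathcal{M}b_{j,k}$ (the grand maximal operator dominates its argument almost everywhere, as one sees by letting $t\to 0$ against a fixed bump in $\mathcal{F}_N$ with nonzero integral), the estimate on $\mathcal{M}b_{j,k}$ in part (\romannumeral5) of Lemma 2.2 gives $|b_{j,k}|\lesssim \mathcal{M}f\,\chi_{Q_{j,k}}$, because the tail term there is supported off $Q_{j,k}$. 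Summing over $k$ and invoking the bounded intersection property of $\{Q_{j,k}\}_{k\in K_j}$ together with $\mathcal{O}_j=\bigcup_{k}Q_{j,k}$ from part (\romannumeral2) yields the claimed pointwise estimate, with implicit constant depending only on the overlap constant and on $n$, hence not on $j$ or $k$.

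Next I pair with $\varphi$. As $b_j\in L^{1}_{\mathrm{loc}}(\mathbb{R}^n)$ is supported in $\mathcal{O}_j$,
$$
|\langle b_j,\varphi\rangle|\le\int_{\mathcal{O}_j}|b_j(x)|\,|\varphi(x)|\,dx\lesssim\int_{\mathcal{O}_j}\mathcal{M}f(x)\,|\varphi(x)|\,dx.
$$
I then split $\mathcal{O}_j$ along the dyadic annuli $B(2^{l})\setminus B(2^{l-1})$, $l\ge 0$ (with $B(2^{-1}):=\emptyset$). Fix $M>n$; since $\varphi\in\mathcal{S}(\mathbb{R}^n)$ one has $|\varphi(x)|\le C_\varphi(1+|x|)^{-M}\lesssim C_\varphi 2^{-lM}$ on the $l$-th annulus, where $C_\varphi$ is a Schwartz seminorm of $\varphi$. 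Writing $A_l:=\|\mathcal{M}f\cdot\chi_{\mathcal{O}_j}\|_{L^{1}(B(2^{l}))}$ and using $\int_{(B(2^{l})\setminus B(2^{l-1}))\cap\mathcal{O}_j}\mathcal{M}f\le A_l$, this produces
$$
|\langle b_j,\varphi\rangle|\lesssim C_\varphi\sum_{l=0}^{\infty}2^{-lM}A_l.
$$

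Finally I upgrade this $\ell^{1}$-type sum to the $\ell^{\mu}$-norm appearing in \eqref{30}. Writing $2^{-lM}A_l=(2^{-ln}A_l)\cdot 2^{-l(M-n)}$ and applying H\"older's inequality with exponents $\mu$ and $\mu'$ (so $1/\mu+1/\mu'=1$) gives
$$
\sum_{l=0}^{\infty}2^{-lM}A_l\le\left(\sum_{l=0}^{\infty}\big(2^{-ln}A_l\big)^{\mu}\right)^{1/\mu}\left(\sum_{l=0}^{\infty}2^{-l(M-n)\mu'}\right)^{1/\mu'},
$$
and since $M>n$ the second factor is a finite constant depending only on $M,n,\mu$, which I absorb into $C_\varphi$; this is exactly \eqref{30}. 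The only genuine point to watch is that the extra decay $2^{-l(M-n)}$ with $M>n$ is what allows the (smaller) $\ell^{\mu}$-norm to dominate, rather than the cruder $\ell^{1}$-sum one would obtain at $\mu=1$; this forces using strictly more than $n$ orders of Schwartz decay of $\varphi$, which is harmless here. I expect the verification of the first step---checking that the constant in $|b_j|\lesssim \mathcal{M}f\,\chi_{\mathcal{O}_j}$ is independent of $j$ through the bounded intersection property---to be the only place requiring real care, the remainder reducing to Schwartz decay and a single H\"older estimate.
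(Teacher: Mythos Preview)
The paper does not prove this lemma; it merely cites \cite{BS2014}. Your argument is correct in the present setting, because Lemma~2.2 carries the standing hypothesis $f\in\mathcal{S}'(\mathbb{R}^n)\cap L^1_{\mathrm{loc}}(\mathbb{R}^n)$, so each $b_{j,k}=(f-c_{j,k})\eta_{j,k}$ is a genuine $L^1$ function and your pointwise bound $|b_j|\lesssim \mathcal{M}f\,\chi_{\mathcal{O}_j}$ is legitimate.

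Your route is more elementary than the standard one. The proof in \cite{BS2014} (following Stein) works at the level of distributions and therefore must control $\langle b_{j,k},\varphi\rangle$ through the full estimate on $\mathcal{M}b_{j,k}$ in Lemma~2.2(\romannumeral5), in particular the tail term $2^{j}\ell_{j,k}^{\,n+d+1}/|x-x_{j,k}|^{n+d+1}$; summing those tails over the Whitney cubes is exactly what produces the specific exponent $\mu=(n+d+1)/n$. In your argument $\mu$ is incidental: the H\"older step goes through for any $\mu\ge 1$ once you take $M>n$ in the Schwartz decay, so you have in fact proved a marginally stronger inequality. The price is that your proof does not survive if one drops the $L^1_{\mathrm{loc}}$ assumption on $f$, whereas the cited proof does; for the applications in this paper (Lemma~6.4 and Theorem~6.2 only use $f\in LM_{\vec p\theta,w}\subset L^1_{\mathrm{loc}}$) this is harmless.
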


\begin{lemma}
Let $1<\vec{p}<\infty, 1<\theta \leq \infty$, $w \in \Omega_{\theta}$, define $\hat{v}_{1}, \hat{v}_{2}$ by \eqref{18}, $H$ is bounded from $L_{\theta, \hat{\nu}_{1}}(0, \infty)$ to $L_{\theta, \hat{v}_{2}}(0, \infty)$, $f \in L M_{\vec{p} \theta, w}\left(\mathbb{R}^{n}\right)$ and  $\vec{s}$ satisfies \eqref{12} for all $r>0$. Then in the notation of Lemma 2.2, in the topology of $\mathcal{S}^{\prime}\left(\mathbb{R}^{n}\right)$,  $g_{j} \rightarrow 0$ as $j \rightarrow-\infty$ and $b_{j} \rightarrow 0$ as $j \rightarrow \infty$. In particular
$$
f=\sum_{j=-\infty}^{\infty}\left(g_{j+1}-g_{j}\right).
$$

\end{lemma}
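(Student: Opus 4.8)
The plan is to establish the two convergence statements separately and then read off the telescoping identity from them. For $g_{j}\to 0$ as $j\to-\infty$ I would invoke only the $L^{\infty}$ bound of Lemma 2.2(iv), giving $\|g_{j}\|_{L^{\infty}}\lesssim 2^{j}$. Then for every $\varphi\in\mathcal{S}(\mathbb{R}^{n})$ one has $|\langle g_{j},\varphi\rangle|\le\|g_{j}\|_{L^{\infty}}\|\varphi\|_{L^{1}}\lesssim 2^{j}\|\varphi\|_{L^{1}}\to 0$ as $j\to-\infty$, so $g_{j}\to 0$ in $\mathcal{S}'(\mathbb{R}^{n})$.

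The substantial part is $b_{j}\to 0$ as $j\to+\infty$. Here I would start from Lemma 6.3, which gives
$$|\langle b_{j},\varphi\rangle|\le C_{\varphi}\Bigl\{\sum_{l=0}^{\infty}\bigl(2^{-ln}\|\mathcal{M}f\cdot\chi_{\mathcal{O}_{j}}\|_{L^{1}(B(2^{l}))}\bigr)^{\mu}\Bigr\}^{1/\mu},$$
and the aim is to show the right-hand side tends to $0$. Two ingredients are needed. First, for each fixed $l$ the quantity $\|\mathcal{M}f\cdot\chi_{\mathcal{O}_{j}}\|_{L^{1}(B(2^{l}))}$ tends to $0$: indeed $\mathcal{M}f\in LM_{\vec{p}\theta,w}(\mathbb{R}^{n})$ by Theorem 5.2, hence $\mathcal{M}f\in L^{1}_{\mathrm{loc}}(\mathbb{R}^{n})$ by Lemma 6.2, and since $\mathcal{M}f<\infty$ a.e. we have $|\mathcal{O}_{j}|=|\{\mathcal{M}f>2^{j}\}|\to 0$, so the dominated convergence theorem on the fixed ball $B(2^{l})$, with dominating function $\mathcal{M}f\,\chi_{B(2^{l})}$, yields the claim. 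Second, I would dominate each summand uniformly in $j$: since $\mathcal{O}_{j}\subset\mathcal{O}_{j_{0}}$ whenever $j\ge j_{0}$, every summand is bounded by $2^{-ln}\|\mathcal{M}f\|_{L^{1}(B(2^{l}))}$, independently of $j$.

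The crux is therefore to verify $\sum_{l=0}^{\infty}\bigl(2^{-ln}\|\mathcal{M}f\|_{L^{1}(B(2^{l}))}\bigr)^{\mu}<\infty$. I would use the mixed-norm Hölder inequality $\|\mathcal{M}f\|_{L^{1}(B(2^{l}))}\lesssim 2^{l(n-\sum_{i=1}^{n}1/p_{i})}\|\mathcal{M}f\|_{L^{\vec{p}}(B(2^{l}))}$, so that each term is controlled by $2^{-l\sum_{i=1}^{n}1/p_{i}}\|\mathcal{M}f\|_{L^{\vec{p}}(B(2^{l}))}$, and then bound $\|\mathcal{M}f\|_{L^{\vec{p}}(B(2^{l}))}$ through $\|\mathcal{M}f\|_{LM_{\vec{p}\theta,w}}$ using the monotonicity of $r\mapsto\|\mathcal{M}f\|_{L^{\vec{p}}(B(r))}$ together with $w\in\Omega_{\theta}$ and the structural hypotheses (the boundedness of the Hardy operator and the decay condition \eqref{12} on $\vec{s}$), which are precisely what force $2^{-l\sum_{i=1}^{n}1/p_{i}}/w(2^{l})$ to decay fast enough to sum. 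This uniform summability is the main obstacle, since it is where all the weight assumptions must be combined; once it holds, dominated convergence for series lets the limit pass through the sum, so the right-hand side tends to $0$ and $b_{j}\to 0$ in $\mathcal{S}'(\mathbb{R}^{n})$.

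Finally, the decomposition follows by telescoping. For integers $M<N$ one has $\sum_{j=M}^{N}(g_{j+1}-g_{j})=g_{N+1}-g_{M}$. Because $b_{j}\to 0$ gives $g_{j}=f-b_{j}\to f$ as $j\to+\infty$, while $g_{M}\to 0$ as $M\to-\infty$, letting $N\to+\infty$ and $M\to-\infty$ yields $\sum_{j=-\infty}^{\infty}(g_{j+1}-g_{j})=f$ in $\mathcal{S}'(\mathbb{R}^{n})$.
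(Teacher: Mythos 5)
Your proposal is correct and takes essentially the same route as the paper's proof: H\"older's inequality to pass from $\frac{1}{2^{ln}}\|\mathcal{M}f\|_{L^{1}(B(2^{l}))}$ to $\frac{1}{2^{l\sum_{i=1}^{n}1/p_{i}}}\|\mathcal{M}f\|_{L^{\vec{p}}(B(2^{l}))}$, control of the latter by $\frac{1}{w(2^{l})}\|f\|_{LM_{\vec{p}\theta,w}}$, summability of $\sum_{l}\bigl(2^{-l\sum_{i=1}^{n}1/p_{i}}/w(2^{l})\bigr)^{\mu}$ via \eqref{12}, dominated convergence applied to the estimate of Lemma 6.3, and the telescoping identity. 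You simply make explicit what the paper compresses into ``the Lebesgue convergence theorem'' (namely that $\|\mathcal{M}f\cdot\chi_{\mathcal{O}_{j}}\|_{L^{1}(B(2^{l}))}\to 0$ for each fixed $l$ because $\mathcal{M}f<\infty$ a.e.) and you use the correct good-part bound $\|g_{j}\|_{L^{\infty}}\lesssim 2^{j}$, where the paper's Lemma 2.2(iv) writes $2^{-j}$ --- evidently a typo, since only the bound $\lesssim 2^{j}$ yields $g_{j}\to 0$ as $j\to-\infty$.
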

\begin{proof}
Observe that
$$
\begin{aligned}
\frac{1}{2^{l n}}\left\|\mathcal{M} f \cdot \chi \mathcal{O}_{j}\right\|_{L^{1}\left(B\left(2^{l}\right)\right)} &\lesssim \frac{1}{2^{l n}}\|\mathcal{M} f\|_{L^{1}\left(B\left(2^{l}\right)\right)}\\
&\lesssim \frac{1}{2^{l
		\sum_{i=1}^{n}\frac{1}{p_i}}}\|\mathcal{M} f\|_{L^{\vec{p}}\left(B\left(2^{l}\right)\right)}\\
&\lesssim \frac{1}{2^{l
		\sum_{i=1}^{n}\frac{1}{p_i}} w\left(2^{l}\right)}\|f\|_{H L M_{\vec{p} \theta, w}}\\
&\lesssim \frac{1}{2^{l
		\sum_{i=1}^{n}\frac{1}{p_i}} w\left(2^{l}\right)}\|f\|_{L M_{\vec{p} \theta, w }}.
\end{aligned}
$$
Note that \eqref{12} and $\mu := \frac{n+d+1}{n}$.
$$
\sum_{l=1}^{\infty}\left(\frac{1}{2^{\l
		\sum_{i=1}^{n}\frac{1}{p_i}} w\left(2^{l}\right)}\right)^{\mu}<\infty
$$
Consequently, we may use the Lebesgue convergence theorem to conclude that $b_{j} \rightarrow 0$ as $j \rightarrow \infty$. Hence, it follows that $f=\lim _{j \rightarrow \infty} g_{j}$ in $\mathcal{S}^{\prime}\left(\mathbb{R}^{n}\right)$. Consequently, it follows from Lemma 2.2 (4) that $f=\lim _{j \rightarrow \infty} g_{j}=\lim _{j, k \rightarrow \infty} \sum_{l=-k}^{j}\left(g_{l+1}-g_{l}\right)$ in $\mathcal{S}^{\prime}\left(\mathbb{R}^{n}\right)$.
\end{proof}
\par 
Next, to prove Theorem 6.2.
\begin{proof}
For each $j \in \mathbb{Z}$, consider the level set
$$
\mathcal{O}_{j} :=\left\{x \in \mathbb{R}^{n}: \mathcal{M} f(x)>2^{j}\right\}
$$
Then it follows immediately from the definition that
$$
\mathcal{O}_{j+1} \subset \mathcal{O}_{j} .
$$
Apply Lemma 2.2, then $f$ can be decomposed as
$$
f=g_{j}+b_{j}, \quad b_{j}=\sum_{k} b_{j, k}, \quad b_{j, k}=\left(f-c_{j, k}\right) \eta_{j, k}
$$
where each $b_{j, k}$ is supported in a cube $Q_{j, k}$ as described in Lemma 2.2.
$$
f=\sum_{j=-\infty}^{\infty}\left(g_{j+1}-g_{j}\right)
$$
with the series converging in the sense of distributions from Lemma 6.4. 
$$
f=\sum_{j, k} A_{j, k}, \quad g_{j+1}-g_{j}=\sum_{k} A_{j, k} \quad(j \in \mathbb{Z})
$$
in the sense of distributions, where each $A_{j, k}$, supported in $Q_{j, k}$, satisfies the pointwise estimate $\left|A_{j, k}(x)\right| \leq C_{0} 2^{j}$ for some universal constant $C_{0}$ and the moment condition $\int_{\mathbb{R}^{n}} A_{j, k}(x) q(x) d x=0$ for every $q(x) \in \mathcal{P}_{d}\left(\mathbb{R}^{n}\right)$. With these observations in mind, write
$$
a_{j, k} := \frac{A_{j, k}}{C_{0} 2^{j}}, \quad \kappa_{j, k} := C_{0} 2^{j} .
$$
Then we shall obtain that each $a_{j, k}$ satisfies
$$
\left|a_{j, k}\right| \leq \chi_{Q_{j, k}}, ~~~\quad~  \int_{\mathbb{R}^{n}} x^{\alpha} a_{j, k}(x) d x=0 
$$
and that $f=\sum_{j, k} \kappa_{j, k} a_{j, k}$ in the topology of $H L M_{\vec{p} \theta, w}\left(\mathbb{R}^{n}\right)$. Rearrange $\left\{a_{j, k}\right\}$ to obtain $\left\{a_{j}\right\}$. Do the same rearrangement for  $\left\{\lambda_{j, k}\right\}$.
To establish \eqref{79}, write
$$
\beta  := \left\|\left(\sum_{j=-\infty}^{\infty}\left|\lambda_{j} \chi_{Q_{j}}\right|^{v}\right)^{1 / v}\right\|_{L M_{\vec{p} \theta, w }} .
$$
Since
$$
\left\{\left(\kappa_{j, k} ; Q_{j, k}\right)\right\}_{j, k}=\left\{\left(\lambda_{j} ; Q_{j}\right)\right\}_{j},
$$
we have
$$
\beta =\left\|\left(\sum_{j=-\infty}^{\infty} \sum_{k \in K_{j}}\left|\kappa_{j, k} \chi_{Q_{j, k}}\right|^{v}\right)^{1 / v}\right\|_{L M_{\vec{p} \theta, w }} .
$$
By using the definition of $\kappa_{j}$, we then have
$$
\beta =C_{0}\left\|\left(\sum_{j=-\infty}^{\infty} \sum_{k \in K_{j}}\left|2^{j} \chi_{Q_{j, k}}\right|{ }^{v}\right)^{1 / v}\right\|_{L M_{\vec{p} \theta, w }}=C_{0}\left\|\left(\sum_{j=-\infty}^{\infty} 2^{j v} \sum_{k \in K_{j}} \chi_{Q_{j, k}}\right)^{1 / v}\right\|_{L M_{\vec{p} \theta, w }} .
$$
Observe that \eqref{88}, together with the bounded overlapping property, yields
$$
\chi_{\mathcal{O}_{j}}(x) \leq \sum_{k \in K_{j}} \chi_{Q_{j, k}}(x) \leq \sum_{k \in K_{j}} \chi_{200 Q_{j, k}}(x) \lesssim \chi_{\mathcal{O}_{j}}(x) \quad\left(x \in \mathbb{R}^{n}\right) .
$$
Thus, 
$$
\beta  \lesssim \left\|\left(\sum_{j=-\infty}^{\infty}\left(2^{j} \chi_{\mathcal{O}_{j}}\right)^{v}\right)^{1 / v}\right\|_{L M_{\vec{p} \theta, w }} .
$$
Recalling that $\mathcal{O}_{j} \supset \mathcal{O}_{j+1}$ for each $j \in \mathbb{Z}$, we have
$$
\sum_{j=-\infty}^{\infty}\left(2^{j} \chi_{\mathcal{O}_{j}}(x)\right)^{v} \sim\left(\sum_{j=-\infty}^{\infty} 2^{j} \chi_{\mathcal{O}_{j}}(x)\right)^{v} \sim\left(\sum_{j=-\infty}^{\infty} 2^{j} \chi_{\mathcal{O}_{j} \backslash \mathcal{O}_{j+1}}(x)\right)^{v} \quad\left(x \in \mathbb{R}^{n}\right) .
$$
Then, 
$$
\beta  \lesssim\left\|\sum_{j=-\infty}^{\infty} 2^{j} \chi_{\mathcal{O}_{j} \backslash \mathcal{O}_{j+1}}\right\|_{L M_{\vec{p} \theta, w }} .
$$
It follows by the definition of $\mathcal{O}_{j}$ that $2^{j}<\mathcal{M} f(x)$ for all $x \in \mathcal{O}_{j}$. Hence, we have
$$
\beta  \lesssim \left\|\sum_{j=-\infty}^{\infty} \chi_{\mathcal{O}_{j} \backslash \mathcal{O}_{j+1}} \mathcal{M} f\right\|_{L M_{\vec{p} \theta, w }} \lesssim\|\mathcal{M} f\|_{L M_{\vec{p} \theta, w }},
$$
So we receive the proof of Theorem 6.2.
\end{proof}
\section{The Hardy operators  on local mixed Morrey-type spaces}
\begin{theorem}
Suppose $1<\vec{p}<\infty, 1 \leq \theta \leq \infty, w \in \Omega_{\vec{p} \theta}$. Then
$
\|H f\|_{L M_{\vec{p} \theta, w}} \lesssim \|f\|_{L M_{\vec{p} \theta, w}}.
$
\end{theorem}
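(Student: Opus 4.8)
The plan is to exploit the structural feature that distinguishes the Hardy operator $H$ from the maximal operator $M$. Whereas $Mf(x)$ feels the values of $f$ at arbitrarily large scales, which is precisely why Theorem 3.1 had to assume the boundedness of $H^{*}$ on a one–dimensional weighted space, the averaging operator $Hf(x)=|x|^{-n}\int_{|y|<|x|}f(y)\,dy$ only sees $f$ on the region enclosed by $x$. This locality lets me pass from a pointwise bound to the norm estimate with no tail condition and no doubling assumption, so that $w\in\Omega_{\vec p\theta}$, needed here only to make the space nontrivial, is enough.

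First I would record the pointwise bound $|Hf(x)|\lesssim Mf(x)$: since $B(0,|x|)\subset B(x,2|x|)$ and $B(x,2|x|)\ni x$, the average of $|f|$ over $B(0,|x|)$ is at most $2^{n}Mf(x)$. Combined with the scalar maximal inequality $\|Mg\|_{L_{\vec p}}\lesssim\|g\|_{L_{\vec p}}$ of Lemma 3.1, valid because $1<\vec p<\infty$, this yields the global mixed–Lebesgue bound $\|Hg\|_{L_{\vec p}(\R^n)}\lesssim\|g\|_{L_{\vec p}(\R^n)}$.

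Next comes the decisive localisation step. For $x\in Q(0,r)$ the averaging ball $B(0,|x|)$ is contained in $Q(0,r)$, up to the fixed dilation constant that compares balls with cubes, so $Hf(x)=H(f\chi_{Q(0,r)})(x)$ there. Restricting the global bound of the previous step to $g=f\chi_{Q(0,r)}$ and using that the mixed norm decreases under restriction of the domain gives the uniform estimate
\begin{equation*}
\|Hf\|_{L_{\vec p}(Q(0,r))}\;\lesssim\;\|f\chi_{Q(0,r)}\|_{L_{\vec p}(\R^n)}\;=\;\|f\|_{L_{\vec p}(Q(0,r))}\qquad(r>0),
\end{equation*}
with a constant independent of $r$. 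Finally I multiply by $w(r)$ and take the $L_{\theta}(0,\infty)$ quasi-norm; since the inequality holds pointwise in $r$, monotonicity of the quasi-norm immediately gives
\begin{equation*}
\|Hf\|_{LM_{\vec p\theta,w}}=\big\|w(r)\|Hf\|_{L_{\vec p}(Q(0,r))}\big\|_{L_\theta(0,\infty)}\lesssim\big\|w(r)\|f\|_{L_{\vec p}(Q(0,r))}\big\|_{L_\theta(0,\infty)}=\|f\|_{LM_{\vec p\theta,w}},
\end{equation*}
which is the claim; note that this step is valid for every $0<\theta\le\infty$, consistent with the hypothesis $1\le\theta\le\infty$.

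The main obstacle is the localisation step: one must verify that the averaging region $B(0,|x|)$ for $x\in Q(0,r)$ really stays inside a fixed dilate of $Q(0,r)$, so that the estimate is genuinely local and the constant does not depend on $r$. If the geometry forces the slightly larger cube $Q(0,\sqrt n\,r)$, one would either invoke the doubling comparability of the two local $L_{\vec p}$ norms or, to avoid any extra condition on $w$, replace the argument above by a direct one–dimensional weighted Hardy inequality for the nondecreasing profile $F(r)=\|f\|_{L_{\vec p}(Q(0,r))}$; there the finiteness of $\|w(r)r^{\sum_{i=1}^n 1/p_i}\|_{L_\theta(0,t)}$ built into $w\in\Omega_{\vec p\theta}$ is exactly what governs the behaviour at the origin. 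Either route keeps the proof free of the $H^{*}$–boundedness hypothesis that was needed for the maximal operator.
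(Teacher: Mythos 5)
Your route is genuinely different from the paper's. The paper proves this theorem by synthesis: it decomposes $f=\sum_j \lambda_j a_j$ via the nonsmooth decomposition of Theorem 6.2, symmetrizes with the Haar average $Sf(x)=\int_{\mathrm{SO}(n)}f(Ax)\,d\mu(A)$ so that $Hf=HSf$, bounds each piece by $|HSa_j|\lesssim S\chi_{Q_j}$, and concludes with the synthesis estimate \eqref{13}. You replace all of that machinery by the pointwise domination $|Hf|\lesssim Mf$ plus locality of $H$. In fact your first two steps already prove the theorem, more simply than the paper does, under the hypotheses this paper actually has in force: since $B(0,|x|)\subset B(x,2|x|)$, one has $|Hf(x)|\le 2^{n}Mf(x)$ pointwise \emph{everywhere}, and the $LM_{\vec p\theta,w}$-boundedness of $M$ (Theorem 3.1, under the $H^{*}$ hypothesis) gives $\|Hf\|_{LM_{\vec p\theta,w}}\lesssim\|Mf\|_{LM_{\vec p\theta,w}}\lesssim\|f\|_{LM_{\vec p\theta,w}}$ in one line, with no localisation and no decomposition. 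Note that the theorem as printed (only $w\in\Omega_{\vec p\theta}$) is under-hypothesized: the paper's own proof silently imports the assumptions of Theorems 6.1--6.2 (doubling of $w$, condition \eqref{12}, boundedness of $H^{*}$), so relying on Theorem 3.1 or on doubling puts you on the same footing as the paper.

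The genuine gap lies in your attempt to dispense with every such hypothesis. First, the localisation claim in your main text is false as stated: for $x$ near a corner of $Q(0,r)$ one has $|x|$ up to $\sqrt n\,r$, so $B(0,|x|)\not\subset Q(0,r)$, and only $Hf=H\bigl(f\chi_{Q(0,\sqrt n\,r)}\bigr)$ on $Q(0,r)$; you flag this yourself. But your second fallback --- that $w\in\Omega_{\vec p\theta}$ alone yields the one-dimensional inequality $\bigl\|w(r)F(\sqrt n\,r)\bigr\|_{L_\theta(0,\infty)}\lesssim\bigl\|w(r)F(r)\bigr\|_{L_\theta(0,\infty)}$ for nondecreasing $F$ --- is unsound: $\Omega_{\vec p\theta}$ constrains only integrability of $w$ near $0$ and $\infty$, not comparability across scales. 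Indeed the statement itself fails for general $w\in\Omega_{\vec p\theta}$ when $n\ge 2$: take $w=\chi_{[1,2]}$ (which belongs to $\Omega_{\vec p\theta}$) and $f=\chi_{Q(0,2+\varepsilon)\setminus Q(0,2)}$; then $\|f\|_{L_{\vec p}(Q(0,r))}=0$ for all $r\le 2$, so $\|f\|_{LM_{\vec p\theta,w}}=0$, while $Hf>0$ on the corner portions of $Q(0,r)$ where $|x|>2+\varepsilon$, a set of positive measure for $r$ close to $2$, so $\|Hf\|_{LM_{\vec p\theta,w}}>0$. Hence no proof from $\Omega_{\vec p\theta}$ alone can exist. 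With the doubling condition $C^{-1}w(r)\le w(2r)\le Cw(r)$ (your first fallback, and an assumption the paper makes anyway in Theorem 6.1), the change of variables $s=\sqrt n\,r$ in the $L_\theta$-quasi-norm closes the dilation step, and your argument becomes complete and considerably more elementary than the paper's.
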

\begin{proof}
Let $f=\sum_{j=1}^{\infty} \lambda_{j} a_{j}$, $\mu$ stands for the Haar measure of $\mathrm{SO}(n)$\cite{GV2017}.
$$
S f(x) := \int_{\mathrm{SO}(n)} f(A x) d \mu(A)
$$
 Note that
 $$
 S: L M_{\vec{p} \theta, w}(\mathbb{R}) \rightarrow L M_{\vec{p} \theta, w}(\mathbb{R})
 $$
is a bounded linear opeator. Since
$$
\begin{aligned}
H f(x) & \sim \frac{1}{|x|^{n}} \int_{B(|x|)} f(y) d y \\
&=\int_{\operatorname{SO}(n)} \frac{1}{|A x|^{n}} \int_{B(|A x|)} f(y) d y d \mu(A) \\
&=\int_{\operatorname{SO}(n)} \frac{1}{|A x|^{n}} \int_{B(|A x|)} f(A y) d y d \mu(A) \\
&=\int_{\operatorname{SO}(n)} \frac{1}{|x|^{n}} \int_{B(|x|)} f(A y) d y d \mu(A) \\
&=H S f(x),
\end{aligned}
$$
therefore
$$
H f=H S f=\sum_{j=1}^{\infty} \lambda_{j} H S a_{j} .
$$
since $a_{j}$ is compactly supported 
$
\left|H S a_{j}\right| \lesssim S \chi_{Q_{j}},
$ and Theorem 6.2, 
$$
\begin{aligned}
\|H f\|_{L M_{\vec{p} \theta, w}} & \leq\left\|\sum_{j=1}^{\infty} \lambda_{j} H S a_{j}\right\|_{L M_{\vec{p} \theta, w}} 
 \lesssim \left\|\sum_{j=1}^{\infty} \lambda_{j} S_{\chi_{j}}\right\|_{L M_{\vec{p} \theta, w}} \\
& \lesssim \left\|\sum_{j=1}^{\infty} \lambda_{j} \chi_{Q_{j}}\right\|_{L M_{\vec{p} \theta, w}}
\lesssim \|f\|_{L M_{\vec{p} \theta, w}}
\end{aligned}
$$

\end{proof}
\section{Associate local mixed Morrey spaces with mixed Herz spaces}
Let $B_j=\{ x\in \mathbb{R}^n : |x| \leq 2^j\}$ and $A_j=B_j \setminus B_{j-1} $ for any $k \in \mathbb{Z}$. Denote $ \chi_j=\chi_{A_j}$, where $\chi_E$ is the charcteristic function of set E.
\begin{definition}(Homogeneous  Mixed Herz spaces) \cite{MW2021}
	Let $\alpha \in \mathbb{R}$, $0<p\leq \infty$, $0<\vec{q}\leq \infty$, where $\vec{q}=(q_1,q_2,\dots,q_n)$, then define  homogeneous mixed Herz spaces $\dot{K}_{\vec{q}}^{\alpha, p}(\mathbb{R}^n)$  by\\
	$$\dot{K}_{\vec{q}}^{\alpha, p}(\mathbb{R}^n):=\{f\in L_{loc}^{\vec{q}}: \|f\|_{\dot{K}_{\vec{q}}^{\alpha, p}}< \infty\},$$\\
	where$$\|f\|_{\dot{K}_{\vec{q}}^{\alpha, p}}=\left(\sum_{j=-\infty}^{\infty}2^{j \alpha p}\|f\chi_j\|_{L_{\vec{q}}}^p\right)^{\frac{1}{p}}.$$	
\end{definition}
\begin{lemma}
Let $ 1<\vec{p}\leq \infty, 1 \leq \theta <\infty $ and $0< \lambda<\sum_{i=1}^{n}\frac{1}{p_i}$ then for any $ w \subset \mathbb{R}^{n}$ 
$$
\|f\|_{L M^\lambda _{\vec{p} \theta }} \sim
\left(\sum_{j=-\infty}^{\infty}\left(2^{-\lambda j} \|f\|_{L_{\vec{p}} (B_j )} \right)^{\theta}\right)^{1 / \theta} 
$$
\end{lemma}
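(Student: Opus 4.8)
The plan is to discretize the defining integral of $\|f\|_{LM^{\lambda}_{\vec{p}\theta}}$ over dyadic scales and to sandwich it between two multiples of the target series, exploiting the monotonicity of the map $r\mapsto\|f\|_{L_{\vec{p}}(Q(0,r))}$ together with the monotonicity of $r^{-\lambda}$. First I would split the half-line into dyadic intervals, writing
\[
\|f\|_{LM^{\lambda}_{\vec{p}\theta}}^{\theta}
=\int_{0}^{\infty}\bigl(r^{-\lambda}\|f\|_{L_{\vec{p}}(Q(0,r))}\bigr)^{\theta}\frac{dr}{r}
=\sum_{j=-\infty}^{\infty}\int_{2^{j}}^{2^{j+1}}\bigl(r^{-\lambda}\|f\|_{L_{\vec{p}}(Q(0,r))}\bigr)^{\theta}\frac{dr}{r}.
\]
On each interval $[2^{j},2^{j+1}]$ two elementary facts are available. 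Since $\lambda>0$, the factor satisfies $2^{-\lambda(j+1)}\le r^{-\lambda}\le 2^{-\lambda j}$; and since $Q(0,r)\subset Q(0,r')$ whenever $r\le r'$, the pointwise domination $|f\chi_{Q(0,r)}|\le|f\chi_{Q(0,r')}|$ together with the lattice monotonicity of $\|\cdot\|_{L_{\vec{p}}}$ shows that $r\mapsto\|f\|_{L_{\vec{p}}(Q(0,r))}$ is nondecreasing, so on this interval it lies between $\|f\|_{L_{\vec{p}}(Q(0,2^{j}))}$ and $\|f\|_{L_{\vec{p}}(Q(0,2^{j+1}))}$. Because $\int_{2^{j}}^{2^{j+1}}\frac{dr}{r}=\log 2$ is independent of $j$, these bounds give
\[
\log 2\cdot 2^{-\lambda\theta}\bigl(2^{-\lambda j}\|f\|_{L_{\vec{p}}(Q(0,2^{j}))}\bigr)^{\theta}
\le\int_{2^{j}}^{2^{j+1}}\bigl(r^{-\lambda}\|f\|_{L_{\vec{p}}(Q(0,r))}\bigr)^{\theta}\frac{dr}{r}
\le\log 2\,\bigl(2^{-\lambda j}\|f\|_{L_{\vec{p}}(Q(0,2^{j+1}))}\bigr)^{\theta}.
\]

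Summing over $j\in\mathbb{Z}$ reduces the claim to comparing $\sum_{j}\bigl(2^{-\lambda j}\|f\|_{L_{\vec{p}}(Q(0,2^{j}))}\bigr)^{\theta}$ with the target series $\sum_{j}\bigl(2^{-\lambda j}\|f\|_{L_{\vec{p}}(B_{j})}\bigr)^{\theta}$. The remaining step is to pass between the cubes $Q(0,2^{j})$ and the balls $B_{j}$. Since $Q(0,2^{j})$ is a cube whose side length is comparable to $2^{j}$, there is an integer $c=c(n)\ge 0$ with $B_{j-c}\subset Q(0,2^{j})\subset B_{j+c}$ for every $j$, whence $\|f\|_{L_{\vec{p}}(B_{j-c})}\le\|f\|_{L_{\vec{p}}(Q(0,2^{j}))}\le\|f\|_{L_{\vec{p}}(B_{j+c})}$ again by the lattice monotonicity of $\|\cdot\|_{L_{\vec{p}}}$. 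Combining this with the single index shift coming from $Q(0,2^{j+1})$ versus $Q(0,2^{j})$ and reindexing the summations, each shift contributes only a fixed power of $2^{\lambda\theta}$. Absorbing these constants yields the two-sided estimate $\|f\|_{LM^{\lambda}_{\vec{p}\theta}}^{\theta}\sim\sum_{j}\bigl(2^{-\lambda j}\|f\|_{L_{\vec{p}}(B_{j})}\bigr)^{\theta}$, which is the assertion after taking $\theta$-th roots; the hypothesis $0<\lambda<\sum_{i=1}^{n}\frac{1}{p_i}$ enters only through $\lambda>0$ in the sandwich and otherwise serves to keep the space nondegenerate.

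I expect the only genuine subtlety to be the anisotropy of the mixed norm: unlike in the classical isotropic Morrey setting one cannot interchange balls and cubes by a rotation, so I would justify the inclusions $B_{j-c}\subset Q(0,2^{j})\subset B_{j+c}$ purely through set inclusions and the monotonicity of $\|\cdot\|_{L_{\vec{p}}}$ under pointwise domination, never through any scaling or rotation invariance. Everything else is the routine bookkeeping of shifting summation indices, and because the comparison is two-sided the argument automatically covers the degenerate case in which one (hence both) of the two quantities is infinite.
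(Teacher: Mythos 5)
Your proof is correct and takes essentially the same route as the paper: both split the defining integral over the dyadic intervals $[2^{j},2^{j+1}]$ and sandwich the integrand using the monotonicity of $r^{-\lambda}$ and of $r\mapsto\|f\|_{L_{\vec{p}}(Q(0,r))}$, picking up the constants $2^{\pm\lambda}(\ln 2)^{1/\theta}$. The only difference is that you additionally justify the passage between the cubes $Q(0,2^{j})$ and the balls $B_{j}$ via set inclusions and an index shift absorbed into the constant, a point the paper's proof silently elides by writing $\|f\|_{L_{\vec{p}}(B_{j})}$ directly inside the integral from the outset.
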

\begin{proof}
 Start with the proof of equality
$$	
	\begin{aligned}
		\|f\|_{L M^\lambda _{\vec{p} \theta }} &=\left(\int_{0}^{\infty}\left(t^{-\lambda} \|f\|_{L_{\vec{p}} (B_j )}\right)^{\theta} \frac{\mathrm{d} t}{t}\right)^{1 / \theta} \\
		&=\left(\sum_{j=-\infty}^{\infty} \int_{2^{j}}^{2^{j+1}}\left(t^{-\lambda} \|f\|_{L_{\vec{p}} (B_j )}\right)^{\theta} \frac{\mathrm{d} t}{t}\right)^{1 / \theta}.
	\end{aligned}
	$$
In fact
	$$
	\begin{array}{l}
	\left(\sum_{j=-\infty}^{\infty} \int_{2^{j}}^{2^{j+1}}\left(t^{-\lambda} \|f\|_{L_{\vec{p}} (B_j )}\right)^{\theta} \frac{\mathrm{d} t}{t}\right)^{1 / \theta} \\
 \leq 2^{\lambda}(\ln 2)^{1 / \theta}\left(\sum_{j=-\infty}^{\infty}\left(2^{-\lambda(j+1)}\|f\|_{L_{\vec{p}} (B_j )}\right)^{\theta}\right)^{1 / \theta}
	\end{array}
	$$
And
$$
\begin{array}{l}
	\left(\sum_{j=-\infty}^{\infty} \int_{2^{j}}^{2^{j+1}}\left(t^{-\lambda}\|f\|_{L_{\vec{p}} (B_j )} \right)^{\theta} \frac{\mathrm{d} t}{t}\right)^{1 / \theta} \\
	\geq 2^{-\lambda}(\ln 2)^{1 / \theta}\left(\sum_{j=-\infty}^{\infty}\left(2^{-\lambda j} \|f\|_{L_{\vec{p}} (B_j )}\right)^{\theta}\right)^{1 / \theta}
\end{array}
$$

\end{proof}
\begin{theorem}
	Let $1<p<\infty, 1 \leq \theta \leq \infty$ , $0<\lambda<\sum_{i=1}^{n}\frac{1}{p_i}$ and 	for all measurable functions $f: \mathbb{R}^{n} \rightarrow \mathbb{C}$.
	Then
	$$
	\|f\|_{L M_{\vec{p} \theta}^{\lambda}} \sim\|f\|_{\dot{K}_{\vec{q}}^{\lambda, \theta}}.
	$$

\end{theorem}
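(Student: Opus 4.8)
The plan is to reduce the assertion to the discrete characterization already furnished by Lemma 8.1 and then to compare, scale by scale, the mixed norm of $f$ over the balls $B_j$ with its mixed norm over the dyadic annuli $A_j$. Writing $F_j:=\|f\|_{L_{\vec p}(B_j)}$ and $a_j:=\|f\chi_j\|_{L_{\vec p}}$, Lemma 8.1 shows that $\|f\|_{LM_{\vec p\theta}^{\lambda}}$ is comparable to the weighted sequence norm $\bigl(\sum_{j=-\infty}^{\infty}(2^{-\lambda j}F_j)^{\theta}\bigr)^{1/\theta}$, while Definition 8.1 (with $\vec q=\vec p$) exhibits $\|f\|_{\dot K_{\vec q}^{\lambda,\theta}}$ as the corresponding weighted $\ell^{\theta}$ norm of the annular pieces $a_j$. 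Thus it suffices to establish
$$\Bigl(\sum_{j=-\infty}^{\infty}(2^{-\lambda j}F_j)^{\theta}\Bigr)^{1/\theta}\sim\Bigl(\sum_{j=-\infty}^{\infty}(2^{-\lambda j}a_j)^{\theta}\Bigr)^{1/\theta},$$
so that only the passage between the cumulative quantities $F_j$ and the annular quantities $a_j$ remains.

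One inequality is immediate: since $A_j\subset B_j$, monotonicity of the mixed Lebesgue norm gives $a_j\le F_j$ for each $j$, so the annular sum is dominated by the ball sum; this needs only $1<\vec p<\infty$ and no condition on $\lambda$. For the reverse inequality I would use $B_j=\bigcup_{k\le j}A_k$ modulo a null set, so that $f\chi_{B_j}=\sum_{k\le j}f\chi_k$, and then invoke the triangle inequality for $\|\cdot\|_{L_{\vec p}}$ (available because every $p_i>1$) to obtain $F_j\le\sum_{k\le j}a_k$. Putting $b_k:=2^{-\lambda k}a_k$, this rewrites as $2^{-\lambda j}F_j\le\sum_{k\le j}2^{-\lambda(j-k)}b_k=(K*b)_j$, where $K_m:=2^{-\lambda m}\chi_{\{m\ge 0\}}$ is a one-sided geometric kernel on $\mathbb{Z}$.

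The crux is then a discrete Hardy (Young convolution) inequality. Because $\lambda>0$, the kernel is summable, $\|K\|_{\ell^1}=\sum_{m\ge 0}2^{-\lambda m}=(1-2^{-\lambda})^{-1}<\infty$, and Young's inequality on $\mathbb{Z}$ yields $\|(2^{-\lambda j}F_j)_j\|_{\ell^{\theta}}\le\|K\|_{\ell^1}\,\|b\|_{\ell^{\theta}}$ for every $1\le\theta\le\infty$, the endpoint $\theta=\infty$ following from a plain supremum estimate. This is exactly the missing inequality, and together with the easy direction it gives the claimed equivalence. I expect this reverse estimate to be the main obstacle: since the mixed norm does not split additively over the disjoint annuli, one cannot expand $F_j^{\theta}$ as a clean sum of $a_k^{\theta}$, and must instead route through subadditivity and absorb the resulting cumulative sums by the summability of $K$ — which is precisely where $\lambda>0$ is used, whereas the upper bound $\lambda<\sum_{i=1}^{n}1/p_i$ serves to keep both weighted sums finite on the admissible class of $f$ by taming the $j\to-\infty$ tail.
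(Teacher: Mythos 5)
Your proof is correct, and at the decisive step it genuinely diverges from --- and in fact repairs --- the paper's own argument. Both proofs share the same skeleton: discretize via Lemma 8.1, observe $a_j\le F_j$ (monotonicity over $A_j\subset B_j$) for the easy direction, and bound $F_j\le\sum_{k\le j}a_k$ by the triangle inequality over the annuli. At that point the paper applies Minkowski's inequality in $\ell^\theta$, i.e.\ $\bigl\|\sum_k g_k\bigr\|_{\ell^\theta_j}\le\sum_k\|g_k\|_{\ell^\theta_j}$ with $g_k(j)=\chi_{(-\infty,j]}(k)\,2^{-\lambda j}a_k$; each summand evaluates to $(1-2^{-\lambda\theta})^{-1/\theta}\,2^{-\lambda k}a_k$, so this step yields only the $\ell^1$-weighted bound $\|f\|_{LM^{\lambda}_{\vec p\theta}}\lesssim\sum_k 2^{-\lambda k}a_k$. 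The paper's final displayed ``$=$'', which converts this $\ell^1$ sum into the $\ell^\theta$ expression $\bigl\{\sum_k\bigl((1-2^{-\lambda})^{-1}2^{-\lambda k}a_k\bigr)^\theta\bigr\}^{1/\theta}$, is false for $\theta>1$ (it goes the wrong way, since $\|\cdot\|_{\ell^\theta}\le\|\cdot\|_{\ell^1}$), so as printed the paper's chain proves the theorem only for $\theta=1$. Your route instead writes $2^{-\lambda j}F_j\le (K*b)_j$ with $b_k=2^{-\lambda k}a_k$ and the one-sided kernel $K_m=2^{-\lambda m}\chi_{\{m\ge0\}}$, and invokes Young's inequality $\ell^1*\ell^\theta\to\ell^\theta$; since $\|K\|_{\ell^1}=(1-2^{-\lambda})^{-1}<\infty$ precisely because $\lambda>0$, this gives the genuine $\ell^\theta$ estimate uniformly for $1\le\theta\le\infty$, including the endpoint $\theta=\infty$ by the supremum bound you mention (a case the paper's Lemma 8.1, stated for $\theta<\infty$, does not even cover). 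Two minor remarks: the upper restriction $\lambda<\sum_{i=1}^{n}1/p_i$ is not actually needed in your convolution argument --- its role is to keep the spaces nontrivial (cf.\ Lemma 2.1 and the class $\Omega_{\vec p,\theta}$), and the two-sided equivalence you prove holds, with both sides possibly infinite, for every measurable $f$; and the paper's indices $\|f\chi_j\|_{L_{\vec q}}$ in the displayed chain should read $\|f\chi_k\|_{L_{\vec p}}$, a typo your notation silently corrects.
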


\begin{proof}
 It is clear from that
$$
\|f\|_{L M_{\vec{p} \theta}^{\lambda}} \gtrsim\left\{\sum_{j=-\infty}^{\infty}\left(2^{-\lambda j}\|f\chi_j\|_{L_{\vec{q}}}\right)^{\theta}\right\}^{\frac{1}{\theta}}.
$$
To prove the reverse estimate, 
$$\begin{aligned}
\|f\|_{L M_{\vec{p} \theta}^{\lambda}}& \sim\left(\sum_{j=-\infty}^{\infty}\left(2^{-\lambda j} \|f\|_{L_{\vec{p}} (B_j )} \right)^{\theta}\right)^{1 / \theta} \\
&=\left\{\sum_{j=-\infty}^{\infty}\left(\sum_{k=-\infty}^{j} 2^{-\lambda j}\|f\chi_j\|_{L_{\vec{q}}}\right)^{\theta}\right\}^{\frac{1}{\theta}}\\
	&=\left\{\sum_{j=-\infty}^{\infty}\left(\sum_{k=-\infty}^{\infty} \chi_{(-\infty, j]}(k) 2^{-\lambda j}\|f\chi_j\|_{L_{\vec{q}}}\right)^{\theta}\right\}^{\frac{1}{\theta}}\\
	\end{aligned}$$
	$$\begin{aligned}
	~~~~\quad~~~~~
	&\leq \sum_{k=-\infty}^{\infty}\left\{\sum_{j=-\infty}^{\infty}\left(\chi_{(-\infty, j]}(k) 2^{-\lambda j}\|f\chi_j\|_{L_{\vec{q}}}\right)^{\theta}\right\}^{\frac{1}{\theta}}\\
	&=\left\{\sum_{k=-\infty}^{\infty}\left(\frac{1}{1-2^{-\lambda}} \cdot 2^{-\lambda k}\|f\chi_j\|_{L_{\vec{q}}}\right)^{\theta}\right\}^{\frac{1}{\theta}}.
\end{aligned}
$$
 The proof of Theorem 8.1  is complete.
\end{proof}

\hspace*{-0.6cm}\textbf{\bf Competing interests}\\
	The authors declare that they have no competing interests.\\
	
\hspace*{-0.6cm}\textbf{\bf Funding}\\
	The research was supported by Natural Science Foundation of China (Grant Nos. 12061069).\\
	
\hspace*{-0.6cm}\textbf{\bf Authors contributions}\\
	All authors contributed equality and significantly in writing this paper. All authors read and approved the final manuscript.\\
	
\hspace*{-0.6cm}\textbf{\bf Acknowledgments}\\
	The authors would like to express their thanks to the referees for valuable advice regarding previous version of this paper.\\
	
	\hspace*{-0.6cm}\textbf{\bf Authors detaials}\\
	Mingwei shi and Jiang Zhou*, moluxiangfeng888@163.com and zhoujiang@xju.edu.cn, College of Mathematics and System Science, Xinjiang University, Urumqi, 830046, P.R China.

\vskip 0.5cm

\bigskip
\noindent Mingwei shi\\
\medskip
\noindent
College of Mathematics and System Sciences\\
Xinjiang University\\
Urumqi 830046\\
\smallskip
\noindent{e-mail }:
\texttt{moluxiangfeng888@163.com} (Mingwei shi)

\bigskip
\noindent Jiang Zhou\\
\medskip
\noindent
College of Mathematics and System Sciences\\
Xinjiang University\\
Urumqi 830046\\
\smallskip
\noindent{e-mail }:
\texttt{zhoujiang@xju.edu.cn} (Mingwei shi)
\bigskip \medskip
\end{document}